\newenvironment{proof}[1][Proof]{\noindent\textit{#1}\quad }
{\hfill $\Box$\vspace{0.7mm}}
\def\R{\mathbb{R}}
\numberwithin{equation}{section}
\newtheorem{theo}{Theorem}[section]
\newtheorem{prop}[theo]{Proposition}
\newtheorem{lem}[theo]{Lemma}
\newtheorem{rem}[theo]{Remark}
\newtheorem{rems}[theo]{Remarks}
\newtheorem{defi}[theo]{Definition}
\newtheorem{cor}[theo]{Corollary}
\numberwithin{equation}{section}
\newcommand{\beq}{\begin{equation}}
\newcommand{\eeq}{\end{equation}}
\newcommand{\bean}{\begin{eqnarray}}
\newcommand{\eean}{\end{eqnarray}}
\newcommand{\bea}{\begin{eqnarray*}}
\newcommand{\eea}{\end{eqnarray*}}
\newcommand{\bd}{\begin{description}}
\newcommand{\ed}{\end{description}}
\newcommand{\bc}{\begin{center}}
\newcommand{\ec}{\end{center}}
\newcommand{\ben}{\begin{enumerate}}
\newcommand{\een}{\end{enumerate}}
\newcommand{\bit}{\begin{itemize}}
\newcommand{\eit}{\end{itemize}}
\begin{document}
\title{A 2D Schr\"odinger equation with time-oscillating exponential nonlinearity}
\date{}
\maketitle

{\small
\begin{center}
{\sc Abdelwahab Bensouilah}\\
Laboratoire Paul Painlev\'e (U.M.R. CNRS 8524), U.F.R. de Math\'ematiques,\\ Universit\'e Lille 1, 59655 Villeneuve d'Ascq Cedex, France.\\
E-mail address: ai.bensouilah@math.univ-lille1.fr\\
{\sc Dhouha Draouil}\\
Universit\'e de Tunis El Manar, Facult\'e des Sciences de Tunis, D\'epartement de Math\'ematiques, Laboratoire \'equations aux d\'eriv\'ees partielles (LR03ES04), 2092 Tunis, Tunisie.\\
E-mail address:  douhadraouil@yahoo.fr \\
 {\sc Mohamed Majdoub} \\
Department of Mathematics, College of Science, Imam Abdulrahman Bin Faisal University,\\
P.O. Box 1982, Dammam, Saudi Arabia.\\
E-mail address:  mmajdoub@iau.edu.sa
\end{center}
}
\numberwithin{equation}{section}
\allowdisplaybreaks

 \begin{quote}
\footnotesize
{\bf Abstract.}
This paper deals with the 2-D Schr\"odinger equation with time-oscillating exponential nonlinearity
$i\partial_t u+\Delta u= \theta(\omega t)\big(e^{4\pi|u|^2}-1\big)$, where $\theta$ is a periodic  $C^1$-function. We prove that for a class of initial data $u_0 \in H^1(\R^2)$, the solution $u_{\omega}$ converges, as $|\omega|$ tends to infinity to the solution $U$ of the limiting equation $i\partial_t U+\Delta U= I(\theta)\big(e^{4\pi|U|^2}-1\big)$ with the same initial data, where $I(\theta)$ is the average of $\theta$.
\end{quote}


\noindent{\bf Keywords}\;\;\; {Nonlinear Schr\"odinger equation, critical energy, well-posedness}

\noindent{\bf MR(2010) Subject Classification}\;\;\;{35-xx, 35Q55}
\maketitle


\eject
\section{Introduction}

Recall the monomial defocusing semilinear Schr\"odinger equation in space dimension
 $N\geq 1$
\begin{equation}
\label{NLSp} i\, \partial_t u + \Delta u =  |u|^{p-1} u,\quad u: \R^{1+N}
\longrightarrow \mathbb{C},
\end{equation}
which has the critical exponents $p^*=\frac{N+2}{N-2}$ (for $N\ge 3$) and $p_*=1+\frac{4}{N} $.

For the {\it{energy subcritical}} case ($p < p^*$), an iteration
of the local-in-time well-posedness result using the {\it{a priori}}
upper bound on $\| u(t) \|_{H^1}$ implied by the conservation laws
establishes global well-posedness for \eqref{NLSp} in $H^1$. Those
solutions scatter when $p>p_*$ ( see \cite{GV4,2Dsubcrit}).

The {\it{energy critical}} case ($p=p^*$) is actually harder than the Klein-Gordon (wave) equation, for which the finite propagation property was crucial to exclude possible concentration of energy, whereas there is no upper bound on the propagation speed for the Schr\"odinger equation.
Nevertheless, based on new ideas such as induction on the energy size and frequency split propagation estimates, Bourgain in  \cite{Bourgain1} proved
global well-posedness and scattering for radially symmetric data, and this result was extended to the general case by Colliander et al. in \cite{CKSTT} using a new interaction Morawetz  inequality.

For $N=2$, the initial value problem \eqref{NLSp} is energy subcritical for
all $p > 1$. To identify an "energy critical" nonlinear
Schr\"odinger initial value problem on $\R^2$, so, it is natural
to consider problems with exponential nonlinearities. According to the sharp Trudinger-Moser inequality on $\R^2$ \cite{AT, Ru} and the 2D critical Sobolev embedding \cite{JFA}, it is natural  to investigate the following Cauchy problem
\begin{equation}
\label{NLSexp}
\left\{
\begin{matrix}
i\partial_t u+\Delta u= u\big(e^{4\pi|u|^2}-1\big), & u:\R^{1+2 }\longrightarrow \mathbb{C} ,\\
u(0) = u_0\in H^1 (\R^2)\,. \\
\end{matrix}
\right.
\end{equation}
Solutions of (\ref{NLSexp}) formally satisfy the conservation of mass and Hamiltonian

\begin{equation}
\label{mass}
M(u(t)):=\|u(t)\|_{L^2}^2=M(u(0)),
\end{equation}

\begin{eqnarray}
\label{ham} H(u(t))&:=&\Big\|\nabla u(t)\Big\|_{L^2}^2+
\frac{1}{4\pi}\Big\|e^{4\pi |u(t)|^2}-1-4\pi|u(t)|^2\Big\|_{L^1(\R^2)}\\
\nonumber &=&H(u(0)).
\end{eqnarray}
For a such problem, global well-posedness together with the scattering for small data were obtained in \cite{NO3}. Using the sharp Trudinger-Moser inequality on $\R^2$, the size of the initial data for which one has local existence was quantified in \cite{JHDE}, and a notion of criticality was proposed:
\begin{defi}
 The Cauchy problem \eqref{NLSexp} is said to be {\it subcritical} if
$H(u_0)<1$, {\it critical} if $H(u_0)=1$ and {\it supercritical} if $H(u_0)>1$.
\end{defi}
The reason behind this definition lies in the fact that one can construct a unique local solution for initial data $u_0$ such that $\|\nabla
 u_0\|_{L^2}<1$, and the time of existence depends only on $\eta:=1-\|\nabla u_0\|_{L^2}$
and $\| u_0\|_{ L^2} $. Therefore the maximal  solution is global in the subcritical case, while in the critical case a concentration phenomena of the Hamiltonian may happens. The following global well-posedness result was proved in \cite{JHDE}.
\begin{theo}
\label{GWP-NLS}
 Assume that $H(u_0)\le 1$, then the problem \eqref{NLSexp}
has a unique global solution $u$ in the class
$$
{\mathcal C}(\R, H^1(\R^2)).
$$
Moreover, $u\in L^4_{loc}(\R,\;{\mathcal C}^{1/2}(\R^2))$ and
satisfies the conservation of the mass and the Hamiltonian.
\end{theo}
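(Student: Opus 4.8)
The plan is to upgrade the local well-posedness theory to a global statement by means of the conservation laws \eqref{mass}--\eqref{ham} and a continuation argument, treating the subcritical and critical cases separately. The starting observation is that the nonlinear part of the Hamiltonian in \eqref{ham} is pointwise nonnegative, since $e^{x}-1-x\ge 0$ for all $x\ge 0$; hence one obtains the a priori bound
\[
\|\nabla u(t)\|_{L^2}^2\;\le\; H(u(t))\;=\;H(u_0)\;\le\;1
\]
throughout the maximal interval of existence $[0,T^*)$, while \eqref{mass} keeps $\|u(t)\|_{L^2}=\|u_0\|_{L^2}$ fixed. In particular $\|u(t)\|_{H^1}$ stays uniformly bounded, so by the local theory a finite $T^*$ can occur only if the guaranteed local existence time, which depends solely on $\eta(t)=1-\|\nabla u(t)\|_{L^2}$ and on the conserved mass, degenerates to zero, that is, if $\|\nabla u(t)\|_{L^2}\uparrow 1$ as $t\uparrow T^*$.

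In the subcritical case $H(u_0)<1$ this degeneration is impossible: the bound above gives $\|\nabla u(t)\|_{L^2}\le\sqrt{H(u_0)}<1$ for all $t$, so $\eta(t)\ge 1-\sqrt{H(u_0)}>0$ is bounded below by a fixed positive constant. The local existence time is then uniform along the flow, and iterating the local construction with this fixed time step yields a solution on all of $\R$; uniqueness propagates from the local statement.

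The critical case $H(u_0)=1$ is the heart of the matter, since a priori $\|\nabla u(t)\|_{L^2}$ is only controlled by $1$ and could in principle saturate the threshold at which the local time collapses. Note first that equality $\|\nabla u(t)\|_{L^2}=1$ would force the nonlinear energy in \eqref{ham} to vanish, hence $u(t)\equiv 0$, contradicting mass conservation when $u_0\neq 0$; thus $\|\nabla u(t)\|_{L^2}<1$ remains strict, and the only possible obstruction is $\|\nabla u(t_n)\|_{L^2}\to 1$ along some sequence $t_n\uparrow T^*$, along which the nonlinear energy tends to $0$. To rule this out I would approximate $u_0$ by the subcritical data $u_0^{(n)}=\lambda_n u_0$ with $\lambda_n\uparrow 1$ (for which $H(u_0^{(n)})<1$ by strict monotonicity of $H$ under scaling), solve each problem globally via the subcritical case, and derive on every fixed compact interval uniform bounds from \eqref{mass}--\eqref{ham}. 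Passing to the limit $n\to\infty$ then requires Strichartz estimates to obtain strong convergence and the sharp Trudinger--Moser inequality to control and pass to the limit in the exponential term, after which uniqueness identifies the limit as the solution issued from $u_0$. The delicate point, and the main obstacle, is precisely to secure convergence on a fixed time interval even though the naive subcritical time steps shrink as $\lambda_n\uparrow 1$; this is where the borderline nature of the Trudinger--Moser threshold and the nonnegativity of the nonlinear energy must be exploited, trading the vanishing of the nonlinear energy (small amplitude) against the cost of the exponential estimate.

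Finally, the extra regularity $u\in L^4_{loc}(\R,\,{\mathcal C}^{1/2}(\R^2))$ follows by applying the 2D Schr\"odinger Strichartz estimates to $\langle\nabla\rangle u$, with $\langle\nabla\rangle=(1-\Delta)^{1/2}$: the admissible pair $(4,4)$ yields $u\in L^4_{loc}(\R,W^{1,4}(\R^2))$ once the first derivative of the nonlinearity is estimated in the dual space $L^{4/3}_{t}L^{4/3}_{x}$ using the $H^1$ bound with $\|\nabla u\|_{L^2}\le 1$ together with the Trudinger--Moser inequality, and the Sobolev embedding $W^{1,4}(\R^2)\hookrightarrow{\mathcal C}^{1/2}(\R^2)$ gives the claimed H\"older regularity in space. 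The conservation of mass and Hamiltonian for the constructed solution are then recovered by approximation and the continuity of $M$ and $H$ on $H^1$.
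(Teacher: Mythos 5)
The paper does not prove this theorem at all: it is quoted verbatim from \cite{JHDE} (``The following global well-posedness result was proved in \cite{JHDE}''), so there is no internal proof to compare against and your proposal must stand on its own merits. Your treatment of the subcritical case $H(u_0)<1$ is correct and is exactly the mechanism the introduction alludes to: the nonnegativity of $e^x-1-x$ gives $\|\nabla u(t)\|_{L^2}\le\sqrt{H(u_0)}<1$, so $\eta(t)=1-\|\nabla u(t)\|_{L^2}$ is bounded below, the local existence time is uniform along the flow, and iteration gives a global solution. The Strichartz argument for $u\in L^4_{loc}(\R,\mathcal{C}^{1/2})$ via the $(4,4)$ pair and $W^{1,4}(\R^2)\hookrightarrow\mathcal{C}^{1/2}(\R^2)$ is also the standard route.

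The genuine gap is the critical case $H(u_0)=1$, which is the entire content of the theorem beyond the easy part, and which you explicitly leave unresolved (``The delicate point, and the main obstacle, is precisely to secure convergence on a fixed time interval\dots''). The approximation scheme $u_0^{(n)}=\lambda_n u_0$ with $\lambda_n\uparrow 1$ does produce global subcritical solutions $u^{(n)}$, but every quantitative bound the subcritical theory yields (the local time step, the Strichartz norms on a fixed interval) degenerates as $H(u_0^{(n)})\to 1$, so nothing prevents the limit from losing control exactly at the putative blow-up time of $u$; compactness and passage to the limit in the exponential nonlinearity are not obtained from what you have written. The actual proof in \cite{JHDE} does not proceed by approximation: it argues by contradiction on a finite $T^*$, uses the Hamiltonian identity to show that $\|\nabla u(t_n)\|_{L^2}\to 1$ forces the nonlinear energy $\frac{1}{4\pi}\|e^{4\pi|u|^2}-1-4\pi|u|^2\|_{L^1}$ to vanish along the sequence, and then exploits this smallness, together with the logarithmic inequality of Proposition \ref{LogEst} and a Trudinger--Moser estimate with exponent strictly below the threshold, to rerun the local argument past $T^*$. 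Without that (or an equivalent) mechanism, your proof establishes the theorem only for $H(u_0)<1$.
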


In the subcritical case, a scattering result was obtained in \cite{Nonlinearity} where the cubic term was subtracted from the non linearity to avoid the critical value $p_*=1+{4\over N}$. More precisely
\begin{theo}
 \label{Main-NLS}
For any global solution $u$ of \eqref{NLSexp} in $H^1$ satisfying
$H(u)<1$, we have $u\in L^4(\R, {\mathcal C}^{1/2})$ and
there exist unique free solutions $u_{\pm}$ such that
$$
\|(u-u_{\pm})(t)\|_{H^1}\to 0\qquad (t\to\pm\infty).
$$
Moreover, the maps
$$
u(0)\longmapsto u_{\pm}(0)
$$
are homeomorphisms between the unit balls in the nonlinear energy
space and the free energy space, namely from $\{\varphi\in H^1\ ; \ H(\varphi)< 1\}$ onto $\{\varphi\in H^1\ ;\  \|\nabla\varphi\|_{L^2}<1\}$.
 \end{theo}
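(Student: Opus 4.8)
The plan is to follow the standard two-step route to scattering. First I would establish the global spacetime bound $u\in L^4(\R,\mathcal{C}^{1/2})$; from it the existence and uniqueness of the asymptotic states $u_\pm$ follow from the Duhamel representation. Then I would construct the inverse (wave-operator) maps and verify the homeomorphism property, including the correct identification of ranges. Since global existence, uniqueness and the conservation laws \eqref{mass}--\eqref{ham} are already granted by Theorem \ref{GWP-NLS}, the genuinely new content is the global-in-time integrability and the asymptotic analysis. The two analytic pillars are the $2$D Strichartz estimates for $U(t)=e^{it\Delta}$ and a nonlinear estimate for the exponential term $F(u)$, the latter resting on the uniform bound $\|\nabla u(t)\|_{L^2}^2\le H(u)<1$ coming from conservation of the Hamiltonian, together with a logarithmic Trudinger--Moser inequality that controls $F(u)$ in a dual Strichartz norm by a power of $\|u\|_{\mathcal{C}^{1/2}}$ times a harmless exponential of the (bounded) energy. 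The subcriticality $H(u)<1$ is exactly the borderline that keeps this exponent favorable so that the estimate can be closed.

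Next I would record the local theory in the $L^4(I,\mathcal{C}^{1/2})$ norm. On any interval $I$, Strichartz combined with the above nonlinear estimate gives $\|u\|_{L^4(I,\mathcal{C}^{1/2})}\le C\|u_0\|_{H^1}+C\,\Phi\big(\|u\|_{L^4(I,\mathcal{C}^{1/2})}\big)$ with $\Phi$ superlinear and vanishing at the origin, whence a continuity argument yields finiteness on short intervals. To globalize, the decisive ingredient is a decay estimate: because the mass-critical cubic term has been removed from the nonlinearity (as indicated in the text preceding the statement), the surviving interaction is mass-supercritical and energy-subcritical, and a Morawetz-type a priori estimate of Lin--Strauss / interaction type produces a finite global spacetime bound; feeding it back into the Strichartz inequality upgrades the local bounds to $\|u\|_{L^4(\R,\mathcal{C}^{1/2})}<\infty$. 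This globalization is the main obstacle: one must propagate the borderline energy bound across infinitely many time steps while keeping the exponential nonlinearity under control, and it is precisely here that $H(u)<1$ and the cubic subtraction are indispensable.

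With the global norm in hand the asymptotics are routine. From $U(-t)u(t)=u_0-i\int_0^t U(-s)F(u(s))\,ds$, finiteness of $\|u\|_{L^4(\R,\mathcal{C}^{1/2})}$ and the dual-Strichartz nonlinear estimate show that $s\mapsto U(-s)F(u(s))$ is integrable in $H^1$ near $\pm\infty$; hence $\phi_\pm:=u_0-i\int_0^{\pm\infty}U(-s)F(u(s))\,ds$ exist in $H^1$, and with $u_\pm(t)=U(t)\phi_\pm$ one gets $\|(u-u_\pm)(t)\|_{H^1}\to0$, uniqueness being immediate from this representation. To identify the ranges I would use dispersion: $\|u(t)\|_{L^4}\to0$ as $t\to\pm\infty$, while the uniform energy bound dominates the exponential remainder, so the potential part of $H$ vanishes in the limit and $\|\nabla u(t)\|_{L^2}^2\to\|\nabla\phi_\pm\|_{L^2}^2$. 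Since $H$ is conserved this gives $H(u)=\|\nabla\phi_\pm\|_{L^2}^2$, so the constraint $H(u)<1$ transfers exactly to $\|\nabla\phi_\pm\|_{L^2}<1$.

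Finally, for the homeomorphism property I would construct the inverse maps by solving the integral equation from $t=\pm\infty$ with a prescribed scattering datum $\varphi$ satisfying $\|\nabla\varphi\|_{L^2}<1$: near $\pm\infty$ the free evolution $U(t)\varphi$ is small in the Strichartz norm, so a contraction yields a unique global solution with the prescribed asymptotics, and the limiting energy computation above forces $H(u)<1$. The perturbative stability of this fixed-point construction under $H^1$ perturbations of the data gives continuity of both $u(0)\mapsto\phi_\pm$ and its inverse. Together with the range identification, this shows that the maps $u(0)\mapsto u_\pm(0)$ are homeomorphisms from $\{\varphi\in H^1: H(\varphi)<1\}$ onto $\{\varphi\in H^1:\|\nabla\varphi\|_{L^2}<1\}$, completing the proof.
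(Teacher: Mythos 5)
The paper does not actually prove this statement: it is quoted from the reference \cite{Nonlinearity} of Ibrahim--Majdoub--Masmoudi--Nakanishi, the only indication of method being that the main ingredient is the interaction Morawetz estimate of \cite{CGT,PV}. Your outline --- local Strichartz theory closed via the logarithmic Trudinger--Moser estimate under the conserved bound $\|\nabla u(t)\|_{L^2}^2\le H(u)<1$, globalization through the interaction Morawetz a priori bound after subtracting the mass-critical cubic term (a point you correctly flag), the Cauchy-criterion construction of $u_\pm$, the limiting energy identity $H(u)=\|\nabla u_\pm(0)\|_{L^2}^2$ for the range, and wave operators for surjectivity and continuity --- is precisely that strategy, so it is consistent with the proof the paper points to.
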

The main ingredient for the subcritical case is a new interaction Morawetz
estimate, proved independently by Colliander et al. and Planchon-Vega
\cite{CGT, PV}.
\begin{rems}\quad\\{\rm
\vspace{-0.8cm}
\bit
\item[i)] The proof in the subcritical case is much simpler for {\tt NLS} than {\tt NLKG} \cite{Duke}, given the a priori estimate due to \cite{CGT,PV}.
\item[ii)] This result was extended in \cite{BIP} to the critical case, but only in the radial framework.
\eit}
\end{rems}


\subsection{Setting of the Problem and Main Results}


In some recent works \cite{Cazenave, Fang}, the following initial value problem was investigated:
\begin{equation}
\label{NLSOsci}
\left\{
\begin{matrix}
i\partial_t u+\Delta u+\theta(\omega t)|u|^{\alpha}u=0, \\
u(0) = \varphi\in H^1 (\R^N), \\
\end{matrix}
\right.
\end{equation}
where $\theta\in C^1(\R,\R)$ is a $\tau$-periodic  function for some $\tau > 0$, $\omega\in\R$ and $\alpha\leq \frac{4}{N-2}$ ($N\geq 3$). A typical example is $\theta(s)=\lambda_0+\lambda_1\sin( s)$ with $\lambda_0, \lambda_1\in\R$. It is shown in \cite{Cazenave, Fang} that the solution $u_\omega$ converges as $|\omega|\to\infty$ to the
solution $U$ of the limiting equation $i\partial_t U +\Delta U + I(\theta)|U|^\alpha U=0$ with the same initial
condition, where $I(\theta)$ is the average of $\theta$ given by
 \beq
 \label{aver}
 I(\theta)={1\over \tau}\int_0^\tau\,\theta(s)\,ds\,.
 \eeq

 It is the aim of this note to extend the results of \cite{Cazenave, Fang} to the 2-D critical semilinear Schr\"odinger equation. Thus we consider the initial value problem
\begin{equation}
\label{MainP}
\left\{
\begin{matrix}
i\partial_t u+\Delta u= \theta(\omega t) u\bigg(e^{4\pi|u|^2}-1\bigg); \\
u(0) = u_0 \in H^1 (\R^2),\\
\end{matrix}
\right.
\end{equation}
where $\omega\in\R$ and $\theta : \R\to\R$ is a $C^1$-function satisfying
\beq
\label{teta1}
\theta\quad\mbox{is}\quad \tau-\mbox{periodic for some}\quad \tau>0;
\eeq
\beq
\label{teta2}
I(\theta)\geq 0.
\eeq

The equivalent integral form of \eqref{MainP} reads as follows
\begin{eqnarray}
\label{T}
u(t)=e^{it\Delta}u_0+i\int_0^te^{i(t-s)\Delta}\theta (\omega s)u(s) \left(e^{4\pi|u(s)|^2}-1\right)ds,
\end{eqnarray}
where $\bigg(e^{it\Delta}\bigg)_{t\in\R}$ is the Schr\"odinger group. Solutions to \eqref{MainP} formally satisfy the conservation of mass.

Remarking that the function $\theta$ is  uniformly bounded, we only take its $L^\infty$-norm when estimating the nonlinearity. Hence, using similar arguments as in \cite{JHDE}, we can prove local well-posedness of \eqref{MainP} in the energy space.
\begin{prop}
\label{LWP-omega}
For every $u_0\in H^1(\R^2)$ such that $\| \nabla u_0\|_{L^2}<1$, there exists a unique maximal $H^1$-solution $u_\omega\in C((-T_{*},T^{*}); H^1)$ to \eqref{MainP} with $0<T_{*},T^{*}\leqslant \infty$. Moreover, $u_{\omega}\in L_{loc}^q((-T_{*},T^{*}),W^{1,r}(\R^2))$ for all admissible pairs $(q,r)$ $($see \eqref{admiss}$)$.
\end{prop}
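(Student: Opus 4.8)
The plan is to construct the solution by a fixed-point argument on the Duhamel formulation \eqref{T}, following the autonomous case treated in \cite{JHDE}, and exploiting the fact that the time-modulation $\theta(\omega\cdot)$ enters the nonlinear estimates only through its uniform bound $\|\theta\|_{L^\infty}$. Concretely, I would fix an admissible pair $(q,r)$ adapted to the exponential nonlinearity and work in the space
\[
X_T = \C([-T,T];H^1(\R^2)) \cap L^q([-T,T];W^{1,r}(\R^2)),
\]
endowed with the natural Strichartz norm, and consider the map
\[
\Phi(u)(t) = e^{it\Delta}u_0 + i\int_0^t e^{i(t-s)\Delta}\,\theta(\omega s)\,u(s)\big(e^{4\pi|u(s)|^2}-1\big)\,ds.
\]
The aim is to prove that, for $T$ small enough, $\Phi$ is a contraction on a suitable closed ball of $X_T$ centered at the free evolution $e^{it\Delta}u_0$.

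First I would record the homogeneous and inhomogeneous Strichartz estimates for the $2$D Schr\"odinger group, which control $\|e^{it\Delta}u_0\|_{X_T}$ by $\|u_0\|_{H^1}$ and the Duhamel term by a dual-Strichartz norm of the nonlinearity $f(u)=u\big(e^{4\pi|u|^2}-1\big)$. Since $|\theta(\omega s)|\le \|\theta\|_{L^\infty}$, the oscillating factor is absorbed harmlessly into the constants, so the analysis reduces to the stationary problem \eqref{NLSexp}. The essential point is then the control of $f(u)$ and its gradient: differentiating, $\nabla f(u)$ contains the critical term $|u|^2 e^{4\pi|u|^2}\nabla u$, and the whole estimate hinges on bounding the exponential factor $e^{4\pi|u|^2}$ in a high Lebesgue space. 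This is where the hypothesis $\|\nabla u_0\|_{L^2}<1$ is used: setting $\eta = 1-\|\nabla u_0\|_{L^2}>0$, the continuity of $t\mapsto \|\nabla u(t)\|_{L^2}$ together with the Strichartz bound lets me choose $T$ and the radius of the ball so small that $\|\nabla u(t)\|_{L^2}$ stays below some $\vartheta<1$ on $[-T,T]$. The sharp Trudinger-Moser inequality on $\R^2$ then yields a uniform bound on $\big\|e^{4\pi|u(t)|^2}-1\big\|_{L^p}$ for every finite $p$, with constant depending only on $\vartheta$ and $\|u_0\|_{L^2}$; this is exactly what feeds the Strichartz estimate after a H\"older splitting that places the exponential factor in a large-$p$ space and the $\nabla u$ factor in the Strichartz pair.

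For the contraction I would estimate the difference through the mean-value inequality $|f(u)-f(v)|\lesssim (1+|u|^2+|v|^2)\big(e^{4\pi|u|^2}+e^{4\pi|v|^2}\big)|u-v|$, and analogously for the gradient differences; the exponential weights are again controlled by Trudinger-Moser at both $u$ and $v$, so that $\Phi$ is Lipschitz on the ball with a constant tending to $0$ as $T\to 0$. The Banach fixed-point theorem then produces a unique solution on $[-T,T]$, and uniqueness in $X_T$ follows from the same difference estimate. A standard continuation argument yields the maximal existence interval $(-T_{*},T^{*})$, and the claimed membership $u_\omega\in L^q_{loc}((-T_{*},T^{*});W^{1,r})$ for every admissible $(q,r)$ follows by reapplying the Strichartz inequalities to the solution on each compact subinterval.

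I expect the main obstacle to be the gradient estimate for the exponential nonlinearity, namely bounding $\big\||u|^2 e^{4\pi|u|^2}\nabla u\big\|$ in the dual Strichartz space. Keeping the Trudinger-Moser constant finite forces one to maintain $\|\nabla u\|_{L^2}$ strictly below the critical threshold $1$ throughout the iteration, which is the delicate bookkeeping that ties together the smallness of $T$, the radius of the ball, and the margin $\eta$.
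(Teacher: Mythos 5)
Your proposal is correct and follows essentially the same route as the paper: the authors simply remark that $|\theta(\omega s)|\le\|\theta\|_{L^\infty}$ lets the oscillating factor be absorbed into the constants, and then invoke the fixed-point/Strichartz/Trudinger--Moser local theory of \cite{JHDE}, which is exactly the argument you sketch (including the role of the margin $\eta=1-\|\nabla u_0\|_{L^2}$ and the difference estimates of Lemma \ref{conv}).
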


Our main goal is to investigate the behavior of $u_\omega$ as $\arrowvert \omega\arrowvert\rightarrow +\infty$. It is natural to expect that $u_\omega$ behaves like the solution $U$ of the following Cauchy problem as $|\omega|$ goes to infinity.
\begin{equation}
\label{LP}
\left\{
\begin{matrix}
i\partial_t U+\Delta U =I(\theta)U\bigg(e^{4\pi\arrowvert U\arrowvert^2}-1\bigg);\\
U(0) = u_0 \in H^1(\R^2),\\
\end{matrix}
\right.
\end{equation}
or equivalently
\beq
\label{ILP}
U(t)=e^{it\Delta} u_0+iI(\theta)\int_0^te^{i(t-s)\Delta}U(s)\left(e^{4\pi\arrowvert U(s)\arrowvert^2}-1\right)ds.
\eeq
For an initial data $u_0\in H^1(\R^2)$ such that $\|\nabla u_0\|_{L^2}<1$, the Cauchy problem \eqref{LP} is locally well-posed and its maximal solution belongs to $C([0,S);H^1(\R^2))\bigcap L^q_{loc}((0,S);W^{1,r}(\R^2))$ for some $S>0$ and for all admissible pairs $(q,r)$. Moreover, the following conservation laws hold:
\beq \label{M}
M(U(t)):= \| U(t)\|_{L^2}^2 = M(u_0),
\eeq
and
\beq
\label{H}
  H(U(t)):= \Big\|\nabla U(t)\Big\|_{L^2}^2+
\frac{I(\theta)}{4\pi}\Big\|e^{4\pi |U(t)|^2}-1-4\pi|U(t)|^2\Big\|_{L^1(\R^2)}=H(u_0)
\eeq
Note that since $I(\theta)$ is positive, then for any initial data $u_0$ with $H(u_0)\leqslant 1$, the Cauchy problem \eqref{LP} is globally well-posed (see \cite{JHDE} for a proof). The main result of this paper reads.
\begin{theo}\label{EL}
Let $u_0 \in  H^1(\mathbb{R}^2)$ such that $H( u_0)<1$. Denote by $u_\omega \in C((-T_*,T^*); H^1)$ the maximal solution of \eqref{MainP} and $U  \in C(\R; H^1)$ the global solution of \eqref{LP}.
\bit
\item[i)] For any $0<T<\infty$, the solution $u_{\omega}$ exists on $[0,T]$ for  $|\omega|$ sufficiently large.
\item[ii)] Assume that for $0<T<\infty$, there exists a constant $0\leqslant A(T)<1$ such that
\beq
\label{AT}
\sup_{t \in [0,T]}\,\|\nabla u_{\omega}(t)\|_{L^2}\leqslant A(T),
\eeq
for $|\omega|$ sufficiently large. Then, $u_{\omega}\rightarrow U$ in $L^q((0,T);W^{1,r})$ as $|\omega|\rightarrow \infty$ for all admissible pairs $(q,r)$ and for any $0<T<\infty$. In particular, the convergence holds in $C([0,T];H^1(\R^2))$.
\eit
\end{theo}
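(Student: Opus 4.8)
The plan is to control the difference $w_\omega := u_\omega - U$ directly from the two Duhamel formulas \eqref{T} and \eqref{ILP}. Writing $F(v) := v\bigl(e^{4\pi|v|^2}-1\bigr)$ and $\Theta := \theta - I(\theta)$ (a $\tau$-periodic function with zero mean, whose primitive is therefore bounded), subtracting the two integral equations and adding and subtracting $\theta(\omega s)F(U(s))$ gives
\[
w_\omega(t) = i\int_0^t e^{i(t-s)\Delta}\,\theta(\omega s)\,[F(u_\omega(s))-F(U(s))]\,ds \;+\; i\,\mathcal D_\omega[F(U)](t),
\]
where $\mathcal D_\omega[h](t):=\int_0^t e^{i(t-s)\Delta}\Theta(\omega s)h(s)\,ds$. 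The first term is a \emph{difference of nonlinearities} with a coefficient bounded by $\|\theta\|_{L^\infty}$; the second is a \emph{pure oscillation} term built from the fixed limiting profile $U$. I would prove part ii) by showing that the first term acts as a contraction in $w_\omega$ on short intervals while the second vanishes as $|\omega|\to\infty$, then iterating.

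For the contraction I would work in the resolution space of \cite{JHDE}, namely the intersection $Y_{T'}$ of the spaces $L^q((0,T');W^{1,r})$ over a finite generating set of admissible pairs, which contains the endpoint $(q,r)=(\infty,2)$ and hence $\mathcal C([0,T'];H^1)$. Applying the inhomogeneous Strichartz estimate to the first term and then the nonlinear estimates of \cite{JHDE} — available precisely because the mass is conserved and $\|\nabla u_\omega\|_{L^2},\|\nabla U\|_{L^2}\le A<1$ on $[0,T']$, so that the sharp Trudinger--Moser inequality applies to $F$ — one obtains
\[
\Bigl\|\,i\!\int_0^{(\cdot)}\!e^{i(\cdot-s)\Delta}\theta(\omega s)[F(u_\omega)-F(U)]\,ds\Bigr\|_{Y_{T'}} \le C(A,\|u_0\|_{L^2})\,\Lambda(T')\,\|w_\omega\|_{Y_{T'}},
\]
with $\Lambda(T')\to0$ as $T'\to0$. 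Choosing $T'=\delta$ so small that the prefactor is $\le\tfrac12$ lets this term be absorbed. The delicate point is that the Lipschitz-type control of the exponential nonlinearity is only subcritical while the gradient stays strictly below $1$; this is exactly where the hypothesis \eqref{AT} with $A(T)<1$ enters.

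The heart of the matter is the \textbf{averaging lemma}: for the fixed profile $h=F(U)$ one has $\|\mathcal D_\omega[h]\|_{Y_T}\to0$ as $|\omega|\to\infty$. I would prove it from two ingredients. First, the inhomogeneous Strichartz estimate bounds $\mathcal D_\omega$ from the relevant dual space into $Y_T$ with a constant $\le C\|\theta\|_{L^\infty}$ that is \emph{uniform in} $\omega$. Second, on the dense subclass of functions that are piecewise constant in time (with values in a smooth spatial space) the decay is explicit: after factoring $\mathcal D_\omega[h](t)=e^{it\Delta}\int_0^t\Theta(\omega s)e^{-is\Delta}h(s)\,ds$ and freezing the integrand on each subinterval, the scalar integrals $\int_I\Theta(\omega s)\,ds$ are $O(1/\omega)$ since the primitive of $\Theta$ is bounded, and a vector-valued averaging (Riemann--Lebesgue) argument using the uniform continuity of $s\mapsto e^{-is\Delta}h(s)$ yields $\sup_t\|\cdot\|_{H^1}\to0$; density together with the uniform bound upgrades this to convergence in all of $Y_T$. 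I expect this lemma to be the main obstacle, for two reasons: one must first check that $F(U)$ genuinely lies in the continuous dual-Strichartz class on which the argument runs — which again relies on the $L^4_{loc}(\R;\mathcal C^{1/2})$ and Trudinger--Moser bounds for the exponential term — and one must circumvent the fact that $u_\omega$ itself depends on $\omega$; the splitting above does exactly this, applying the lemma only to the $\omega$-independent $U$ while $F(u_\omega)-F(U)$ is handled by the contraction.

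Finally I would close both statements by iteration. Partitioning $[0,T]$ into $N\simeq T/\delta$ intervals of length $\delta$ and restarting the estimate on each, the only new input at a restart time $t_k$ is the homogeneous term $e^{i(t-t_k)\Delta}w_\omega(t_k)$, controlled by $\|w_\omega(t_k)\|_{H^1}$, which is already small; since each step contributes a vanishing averaging error, summing the $N$ (finitely many, $\omega$-independent) steps gives $\|w_\omega\|_{Y([0,T])}\to0$, which is ii). For i), I would run a continuity/bootstrap argument: by conservation of the Hamiltonian for \eqref{LP} one has $\|\nabla U(t)\|_{L^2}^2\le H(u_0)<1$ for all $t$, so I fix $A\in(\sqrt{H(u_0)},1)$ and consider the maximal subinterval of $[0,\min(T,T^*))$ on which $\|\nabla u_\omega\|_{L^2}\le A$. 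There the convergence just established forces $\sup\|\nabla u_\omega\|_{L^2}\le\sqrt{H(u_0)}+o(1)<A$ for $|\omega|$ large; since Proposition \ref{LWP-omega} guarantees a lower bound on the existence time depending only on $A$ and $\|u_0\|_{L^2}$ as long as the gradient stays below $1$, the solution can neither stop nor reach the level $A$ before $T$, so $u_\omega$ exists on all of $[0,T]$ once $|\omega|$ is large enough.
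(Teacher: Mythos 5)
Your proposal follows essentially the same route as the paper: the same splitting of $u_\omega-U$ into a difference-of-nonlinearities term (absorbed by a contraction on a finite, $\omega$-uniform partition of $[0,T]$ using Strichartz together with the Trudinger--Moser and logarithmic estimates, which is exactly where $A(T)<1$ enters) plus the oscillatory term $\int_0^t(\theta(\omega s)-I(\theta))e^{i(t-s)\Delta}F(U)\,ds$ handled by the averaging lemma, followed by an interval-by-interval iteration and a continuity/bootstrap argument for part i). The only differences are presentational: the paper imports the averaging lemma from \cite{Cazenave} rather than reproving it, and it produces the contraction factor by a pigeonhole partition of the uniformly bounded $L^4((0,T);{\mathcal C}^{1/2})$ norm rather than by shrinking the interval length, which comes to the same thing once the uniform bound of part i) is available.
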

\begin{rems}\quad\\{\rm
\vspace{-0.8cm}
\bit
\item[i)] Note that the solution $u_\omega$ of \eqref{MainP} is obtained by applying a fixed point argument as in \cite{JHDE}. It follows that the assumption \eqref{AT} holds at least for small $T$.
\item[ii)] Suppose that $I(\theta)<0$ and let $u_0\in H^1(\R^2)$ such that the solution $U$ of \eqref{LP} blows up in finite time (such initial data $u_0$ exists). We don't know whether or not the solution $u_\omega$ of \eqref{MainP} blows up in finite time for $|\omega|$ sufficiently large.
\item[iii)] The theorem does not say anything on what happens to the solution $u_\omega$ if the function $\theta$ changes its sign (note that, when $\theta$ is positive, its average $I(\theta)$ is also positive; so the latter fulfills the assumptions). In particular, the nature of solution $u_\omega$ (global or blowing-up) may change according to $\omega$ and $U(t=t_0)$.  {\it This will be considered in a forthcoming paper}.
\eit}
\end{rems}

The rest of the paper is organized as follows. Section 2 is devoted to give some useful tools needed in the proofs. In Section 3, we give some preliminary results which prepare the proof of our main theorem. The proof of Theorem \ref{EL} is done in Section 4. Finally, we state in the Appendix a Gronwall-type estimate used in the proof of Theorem \ref{EL}.
\section{Useful Tools}
In this section we collect some known and useful estimates.
\begin{prop}[\sf Moser-Trudinger inequality \cite{AT}]\quad\\
\label{MT}
Let $\alpha\in [0,4\pi)$. A constant $c_\alpha$ exists
such that
\begin{equation}
\label{MT1} \|\exp(\alpha |u|^2)-1\|_{L^1(\R^2)}\leq c_\alpha
\|u\|_{L^2(\R^2)}^2
\end{equation}
for all $u$ in $H^1(\R^2)$ such that $\|\nabla
u\|_{L^2(\R^2)}\leq1$. Moreover, if $\alpha\geq 4\pi$, then
(\ref{MT1}) is false.
\end{prop}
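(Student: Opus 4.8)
\textbf{Proof plan for the Moser--Trudinger inequality (Proposition \ref{MT}).}
The plan is to reduce the sharp inequality on $\R^2$ to the classical Moser--Trudinger inequality on bounded domains, combined with a rearrangement/truncation argument to control the behavior at infinity, and then to address the failure of \eqref{MT1} for $\alpha \geq 4\pi$ by exhibiting an explicit family of test functions that saturate the gradient constraint while forcing the left-hand side to diverge. First I would fix $\alpha \in [0,4\pi)$ and a function $u \in H^1(\R^2)$ with $\|\nabla u\|_{L^2} \leq 1$. Since the integrand $\exp(\alpha|u|^2)-1$ is nonnegative and monotone in $|u|$, I would pass to the symmetric decreasing rearrangement $u^*$; the Pólya--Szegő inequality gives $\|\nabla u^*\|_{L^2} \leq \|\nabla u\|_{L^2} \leq 1$, while rearrangement preserves $\|u\|_{L^2}$ and leaves the $L^1$-norm of $\exp(\alpha|u|^2)-1$ unchanged. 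This reduces matters to radial, decreasing $u$, for which pointwise decay and splitting of the spatial integral become tractable.

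The main step is to split the integral over $\R^2$ into a region where $|u|$ is large and a region where $|u|$ is small. On the set $\{|u| \leq 1\}$ one has the elementary bound $\exp(\alpha|u|^2)-1 \leq C_\alpha |u|^2$ (since $e^{\alpha t}-1 \leq (e^\alpha-1)t$ for $t \in [0,1]$), which immediately contributes at most $C_\alpha \|u\|_{L^2}^2$. The delicate contribution comes from the set $\Omega := \{|u| > 1\}$, which for a radial decreasing $u$ is a disk $B_R$; on $\Omega$ I would invoke the classical Moser--Trudinger inequality on the bounded domain $B_R$, namely $\int_{B_R}\exp(\alpha |v|^2)\,dx \leq C|B_R|$ for $v \in H^1_0(B_R)$ with $\|\nabla v\|_{L^2(B_R)}\leq 1$ and $\alpha < 4\pi$. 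To apply it I would use the truncation $v := (|u|-1)_+$, which vanishes on $\partial B_R$, satisfies $\|\nabla v\|_{L^2}\leq \|\nabla u\|_{L^2}\leq 1$, and controls $|u|^2 \leq v^2 + 2|u|$ on $\Omega$; the linear remainder $2|u|$ is absorbed into lower-order $L^2$ terms via Cauchy--Schwarz. The resulting bound is proportional to $|B_R|$, and since $\{|u|>1\}$ has measure at most $\|u\|_{L^2}^2$ by Chebyshev, this closes the estimate with a constant $c_\alpha$ depending only on $\alpha$.

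For the sharpness assertion that \eqref{MT1} fails when $\alpha \geq 4\pi$, I would construct the standard Moser sequence of radial functions: set, in terms of $r=|x|$,
\[
u_n(x) = \frac{1}{\sqrt{2\pi}}\begin{cases} \sqrt{\log n}, & r \leq 1/n,\\[4pt] \dfrac{\log(1/r)}{\sqrt{\log n}}, & 1/n \leq r \leq 1,\\[4pt] 0, & r \geq 1,\end{cases}
\]
which satisfies $\|\nabla u_n\|_{L^2} = 1$ exactly, so the constraint $\|\nabla u\|_{L^2}\leq 1$ holds. A direct computation shows $\|u_n\|_{L^2}^2 = O(1/\log n) \to 0$, whereas on the core disk $\{r \leq 1/n\}$ one has $\alpha|u_n|^2 = \tfrac{\alpha}{2\pi}\log n \geq 2\log n$, so $\exp(\alpha|u_n|^2)-1 \gtrsim n^{\alpha/2\pi}$ on a set of area $\pi/n^2$; for $\alpha \geq 4\pi$ this gives $\|\exp(\alpha|u_n|^2)-1\|_{L^1} \gtrsim n^{\alpha/2\pi - 2} \geq 1$, which cannot be bounded by $c_\alpha \|u_n\|_{L^2}^2 \to 0$. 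This exhibits the blow-up of the ratio and establishes the optimality of the threshold $\alpha = 4\pi$.

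The step I expect to be the main obstacle is the clean transfer from the bounded-domain inequality to the whole plane: one must verify that the truncation $v=(|u|-1)_+$ retains the gradient bound without loss and that the linear cross-term $2|u|$ arising from $|u|^2 \leq (|u|-1)^2 + 2|u| - 1$ is genuinely controlled by $\|u\|_{L^2}^2$ (using $\mathrm{meas}\{|u|>1\} \leq \|u\|_{L^2}^2$) rather than requiring a stronger norm, so that the final constant $c_\alpha$ depends on $\alpha$ alone and not on $u$. Keeping track of the disk radius $R$ and ensuring the $|B_R|$ factor is majorized by the mass is where the constants must be handled carefully.
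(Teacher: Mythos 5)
The paper offers no proof of Proposition \ref{MT} to compare against: the statement is quoted from Adachi--Tanaka \cite{AT} (with \cite{Ru} covering the borderline case discussed in Remark \ref{rem}), so your attempt can only be measured against the cited literature. In outline your plan is exactly the standard argument from \cite{AT}: P\'olya--Szeg\H{o} reduction to radial decreasing functions, the split into $\{|u|\le 1\}$ (where $e^{\alpha t}-1\le (e^{\alpha}-1)t$ gives the $\|u\|_{L^2}^2$ bound) and $\Omega=\{|u|>1\}=B_R$, the truncation $v=(|u|-1)_+\in H^1_0(B_R)$ combined with the scale-invariant Moser inequality $\int_{B_R}e^{4\pi v^2}\,dx\le C|B_R|$, the Chebyshev bound $|\Omega|\le\|u\|_{L^2}^2$, and the Moser sequence for the failure when $\alpha\ge 4\pi$. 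Your counterexample computation is correct, including the endpoint: with your normalization $\|\nabla u_n\|_{L^2}=1$ exactly, $\|u_n\|_{L^2}^2=O(1/\log n)$, and at $\alpha=4\pi$ the left-hand side of \eqref{MT1} stays bounded below by a positive constant while the right-hand side tends to zero.

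One step is stated imprecisely, and as literally written it would fail. From $|u|^2\le v^2+2|u|$ you get $e^{\alpha|u|^2}\le e^{\alpha v^2}e^{2\alpha|u|}$, and you cannot absorb the second factor ``into lower-order $L^2$ terms via Cauchy--Schwarz'': splitting $\int_\Omega e^{\alpha v^2}e^{2\alpha|u|}$ by Cauchy--Schwarz doubles the exponent on $v$ to $2\alpha$, which exceeds $4\pi$ as soon as $\alpha>2\pi$, and the remaining factor $\int_\Omega e^{4\alpha|u|}$ is itself uncontrolled since $|u|$ is unbounded on $\Omega$. The correct mechanism, which is what \cite{AT} uses, is Young's inequality applied in the exponent before exponentiating: $|u|^2=(v+1)^2\le(1+\delta)v^2+\bigl(1+\delta^{-1}\bigr)$, whence $e^{\alpha|u|^2}\le e^{\alpha(1+\delta^{-1})}\,e^{\alpha(1+\delta)v^2}$; the strict hypothesis $\alpha<4\pi$ is precisely what allows a choice of $\delta>0$ with $\alpha(1+\delta)\le 4\pi$, after which Moser's inequality yields a bound $C_\alpha|B_R|\le C_\alpha\|u\|_{L^2}^2$ with $C_\alpha$ depending on $\alpha$ alone. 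With this substitution (everything else in your sketch, including the gradient bound $\|\nabla v\|_{L^2}\le\|\nabla u\|_{L^2}$ for the truncation, is sound) your proof is complete and coincides with the one in the cited reference.
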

\begin{rem}
\label{rem} We point out that $\alpha=4\pi$ becomes admissible in
(\ref{MT1}) if we require $\|u\|_{H^1(\R^2)}\leq1$ rather than
$\|\nabla u\|_{L^2(\R^2)}\leq1$. Precisely, we have
\beq
\label{MT2}
\sup_{\|u\|_{H^1}\leq
1}\;\;\|\exp(4\pi |u|^2)-1\|_{L^1(\R^2)}<\infty
\eeq
 and this is
false for $\alpha>4\pi$. See \cite{Ru} for more details.
\end{rem}

The following estimate is an $L^\infty$ logarithmic inequality
which enables us to establish the link between $ \|e^{4\pi
|u|^2}-1\|_{L^1_T(L^2(\R^2))} $ and dispersion properties of
solutions of the linear Schr\"odinger equation.
\begin{prop}[\sf Log estimate \cite{IMM}]\quad\\
\label{LogEst}
Let $\beta\in]0,1[$. For any $\lambda>\frac{1}{2\pi\beta}$ and
any $0<\mu\leq1$, a constant $C_{\lambda}>0$ exists such
that,
for any function $u\in H^1(\R^2)\cap{\mathcal C}^\beta(\R^2)$, we have
\begin{equation}
\label{Log}
\|u\|^2_{L^\infty}\leq
\lambda\|u\|_\mu^2 \log\left(C_{\lambda} +
\left(\frac{8}{\mu}\right)^\beta\frac{\|u\|_{{\mathcal C}^{\beta}}}{\|u\|_\mu}\right),
\end{equation}
where we set
\begin{equation}
\label{Hmu}
\|u\|_\mu^2:=\|\nabla u\|_{L^2}^2+\mu^2\|u\|_{L^2}^2.
\end{equation}
\end{prop}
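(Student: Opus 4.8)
The plan is to prove the logarithmic bound \eqref{Log} by splitting $u$ at a frequency threshold $N>0$ (to be chosen at the end), controlling the low-frequency part through its $\|\cdot\|_\mu$-norm by a Cauchy–Schwarz argument on the Fourier side and the high-frequency part through its Hölder seminorm, and then optimizing in $N$. The whole difficulty is not the structure of the argument but keeping track of the constants precisely enough to reach the sharp coefficient $\frac{1}{2\pi\beta}$ together with the explicit factor $(8/\mu)^\beta$ inside the logarithm.

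First I would estimate the low-frequency piece. Taking a smooth radial cutoff $\chi$ equal to $1$ on $\{|\xi|\le1\}$ and supported in $\{|\xi|\le2\}$, set $u_{\le N}=\chi(D/N)u$. Then, by Cauchy–Schwarz on the Fourier transform,
\[
\|u_{\le N}\|_{L^\infty}\le\frac{1}{(2\pi)^2}\int_{|\xi|\le 2N}|\widehat u(\xi)|\,d\xi\le\frac{1}{(2\pi)^2}\Big(\int(|\xi|^2+\mu^2)|\widehat u(\xi)|^2\,d\xi\Big)^{1/2}\Big(\int_{|\xi|\le 2N}\frac{d\xi}{|\xi|^2+\mu^2}\Big)^{1/2}.
\]
The first factor is $\|u\|_\mu$ up to the Plancherel constant, in view of \eqref{Hmu}, while the radial integral
\[
\int_{|\xi|\le 2N}\frac{d\xi}{|\xi|^2+\mu^2}=\pi\log\Big(1+\frac{4N^2}{\mu^2}\Big)
\]
produces exactly the logarithm. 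Squaring yields $\|u_{\le N}\|_{L^\infty}^2\le\frac{1}{4\pi}\|u\|_\mu^2\log(1+4N^2/\mu^2)\le\frac{1}{2\pi}\|u\|_\mu^2\log(1+2N/\mu)$, which is where the factor $\frac{1}{2\pi}$, and ultimately the sharp $\frac{1}{2\pi\beta}$, originates.

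Next I would estimate the high-frequency piece using the Hölder regularity. With $\psi_N$ the kernel of $\chi(D/N)$, one has $\psi_N(y)=N^2\psi(Ny)$ with $\int\psi_N=1$, so that $u_{>N}:=u-u*\psi_N$ is an average of increments of $u$:
\[
u_{>N}(x)=\int\psi_N(y)\big(u(x)-u(x-y)\big)\,dy,\qquad \|u_{>N}\|_{L^\infty}\le [u]_{\mathcal{C}^\beta}\,N^{-\beta}\int|z|^\beta|\psi(z)|\,dz\le C_0N^{-\beta}\|u\|_{\mathcal{C}^\beta}.
\]
The key point is that the mean-zero nature of $u-u*\psi_N$ lets the $\beta$-Hölder modulus of continuity absorb the singular frequencies with the polynomial gain $N^{-\beta}$, the integral $\int|z|^\beta|\psi(z)|\,dz$ being finite since $\psi=\check\chi$ is Schwartz.

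Finally I would optimize. Choosing $N=\big(C_0\|u\|_{\mathcal{C}^\beta}/\|u\|_\mu\big)^{1/\beta}$ makes the high-frequency term at most $\|u\|_\mu$; crucially, the exponent $1/\beta$ turns $\log(1+2N/\mu)$ into $\frac{1}{\beta}\log\big((8/\mu)^\beta\|u\|_{\mathcal{C}^\beta}/\|u\|_\mu\big)$ plus bounded corrections, thereby converting the $\frac{1}{2\pi}$ above into $\frac{1}{2\pi\beta}$ and revealing the precise argument of the logarithm in \eqref{Log}. Writing $\|u\|_{L^\infty}\le\|u_{\le N}\|_{L^\infty}+\|u_{>N}\|_{L^\infty}$ and using $(a+b)^2\le(1+\eta)a^2+(1+\eta^{-1})b^2$, the additive lower-order term $(1+\eta^{-1})\|u\|_\mu^2$ and all multiplicative constants are absorbed by enlarging $C_\lambda$, while the slack $\lambda>\frac{1}{2\pi\beta}$ absorbs the factor $1+\eta$. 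The regime where this $N$ is too small, namely $\|u\|_{\mathcal{C}^\beta}$ comparable to $\|u\|_\mu$, is handled separately by the trivial bound $\|u\|_{L^\infty}\le\|u\|_{\mathcal{C}^\beta}$ together with a large enough $C_\lambda$. I expect the main obstacle to be precisely this constant bookkeeping — extracting the sharp $\frac{1}{2\pi\beta}$ and the explicit $(8/\mu)^\beta$ — rather than the qualitative structure, which is the standard low/high frequency split.
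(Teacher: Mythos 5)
Your argument is essentially correct, but note first that the paper contains no proof of Proposition \ref{LogEst} to compare against: it is quoted from \cite{IMM}, where the sharp-constant proof is a \emph{physical-space} argument — one evaluates $u$ at a (near-)maximum point, compares $u(0)$ with its mean values $\bar u(r)$ over circles of radius $r$, uses $|u(0)-\bar u(r)|\le \|u\|_{\mathcal C^\beta}r^\beta$ together with the Cauchy--Schwarz estimate $|\bar u(r)-\bar u(1)|^2\le \frac{1}{2\pi}\log\frac 1r\,\|\nabla u\|_{L^2}^2$ (the source of the sharp $\frac{1}{2\pi\beta}$ and, after optimizing in $r$, of the explicit factor $(8/\mu)^\beta$), and this is also the argument that matches the truncated-logarithm extremizers showing optimality of the condition $\lambda>\frac{1}{2\pi\beta}$. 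Your route is instead a Brezis--Gallou\"et-type frequency splitting, and it does reach the same sharp coefficient because the capacity integral $\int_{|\xi|\le 2N}(|\xi|^2+\mu^2)^{-1}\,d\xi=\pi\log\bigl(1+4N^2/\mu^2\bigr)$ plays exactly the role of the circular-mean estimate; your constants check out ($\|u_{\le N}\|_{L^\infty}^2\le\frac{1}{4\pi}\|u\|_\mu^2\log(1+4N^2/\mu^2)\le\frac{1}{2\pi}\|u\|_\mu^2\log(1+2N/\mu)$ with the $(2\pi)^{-2}$ Fourier convention, and $\int|z|^\beta|\psi(z)|\,dz<\infty$ since $\psi$ is Schwartz). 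Two steps deserve explicit writing rather than the gloss ``bounded corrections'': the conversion of the logarithm, which follows cleanly from $(1+y)^{1/\beta}\ge 1+y^{1/\beta}$ for $\beta\in(0,1)$, giving $\log\bigl(1+\tfrac2\mu (C_0K)^{1/\beta}\bigr)\le\frac1\beta\log\bigl(1+(2/\mu)^\beta C_0K\bigr)$ with $K=\|u\|_{\mathcal C^\beta}/\|u\|_\mu$, after which the mismatch between $(2/\mu)^\beta C_0$ and $(8/\mu)^\beta$ is a $\mu$-independent additive constant inside the logarithm, absorbed — as you say — by the slack in $\lambda$ once $C_\lambda$ is large; and the uniformity in $\mu\in(0,1]$ of every absorbed constant, which holds in your scheme because $C_0$, the $(1+\eta^{-1})\|u\|_\mu^2$ term, and the small-$K$ regime (handled by $\|u\|_{L^\infty}\le\|u\|_{\mathcal C^\beta}$ and $\log(C_\lambda+\cdot)\ge\log C_\lambda$) involve no negative power of $\mu$. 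As for what each approach buys: yours is shorter and more robust (it generalizes readily to other norms and dimensions), while the physical-space proof of \cite{IMM} is what produces the precise constant structure needed at the borderline $\lambda=\frac{1}{2\pi\beta}$, where the same reference proves the double-logarithmic refinement that a crude splitting does not directly give.
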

Recall that ${\mathcal C}^{\beta}(\R^2)$ denotes the space of
$\beta$-H\"older continuous functions endowed with the norm $$
\|u\|_{{\mathcal
C}^{\beta}(\R^2)}:=\|u\|_{L^\infty(\R^2)}+\sup_{x\neq
y}\frac{|u(x)-u(y)|}{|x-y|^{\beta}}. $$
We refer to  \cite{IMM} for the proof of this proposition and more
details. We just point out that the condition  $ \lambda>
\frac{1}{2\pi\beta}$ in (\ref{Log}) is optimal.

In order to establish an energy estimate, one has to consider the
nonlinearity as a source term in \eqref{MainP}, so we need to estimate
it in the $L^1_t(H^1_x)$ norm. To do so, we use (\ref{MT1}) combined
with the so-called Strichartz estimate.
\begin{prop}[\sf Strichartz estimates \cite{Ca}]\quad\\
\label{Strich} Let $v_0$ be a function in $H^1(\R^2)$ and $F\in
L^1(\R, H^1(\R^2))$. Denote by $v$ the solution of the
inhomogeneous linear Schr\"odinger problem
$$
i\partial_t v+\Delta v=F(t,x),\quad v(0)=v_0.
$$
Then, a constant $C$ exists such that for any $T>0$ and any
admissible pairs of Strichartz exponents $(q,r)$ i.e
\beq
\label{admiss}
0\leq\frac{2}{q}=1-\frac{2}{r}<1,
\eeq
yields
\begin{equation}
\label{stri} \|v\|_{L^q([0,T],{W}^{1,r}(\R^2))}\leq
C\bigg[\|v_0\|_{H^1(\R^2)}+\| F\|_{L^1([0,T],H^1(\R^2))}\bigg].
\end{equation}
\end{prop}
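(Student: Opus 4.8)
The plan is to derive \eqref{stri} from the dispersive structure of the free Schr\"odinger group by the $TT^*$ method, and then to pass from the $L^r$-level bound to the $W^{1,r}$-level bound by commuting derivatives with $e^{it\Delta}$. The starting point is the pointwise dispersive estimate in two dimensions,
\beq
\label{disp2d}
\|e^{it\Delta} f\|_{L^\infty(\R^2)} \leq \frac{C}{|t|}\,\|f\|_{L^1(\R^2)},
\eeq
which follows from the explicit oscillatory kernel of $e^{it\Delta}$, together with the unitarity $\|e^{it\Delta} f\|_{L^2}=\|f\|_{L^2}$. Riesz--Thorin interpolation between these two bounds gives, for every $2\leq r\leq\infty$,
\beq
\label{Lrdecay}
\|e^{it\Delta} f\|_{L^r(\R^2)} \leq \frac{C}{|t|^{1-2/r}}\,\|f\|_{L^{r'}(\R^2)},
\eeq
where $r'$ is the H\"older conjugate of $r$.

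The core of the argument is the homogeneous estimate $\|e^{it\Delta} f\|_{L^q([0,T],L^r)}\leq C\|f\|_{L^2}$. I would obtain it by the $TT^*$ duality argument: writing $Tf=e^{it\Delta}f$, one has $TT^*G=\int e^{i(t-s)\Delta}G(s)\,ds$, and it suffices to prove $\|TT^*G\|_{L^q_tL^r_x}\leq C\|G\|_{L^{q'}_tL^{r'}_x}$. Applying Minkowski's inequality and the dispersive bound \eqref{Lrdecay} reduces this to the one-dimensional (in time) convolution estimate
\bea
\left\| \int \frac{\|G(s)\|_{L^{r'}}}{|t-s|^{1-2/r}}\,ds \right\|_{L^q_t} \leq C\,\|G\|_{L^{q'}_t L^{r'}_x},
\eea
which is exactly the Hardy--Littlewood--Sobolev inequality, valid precisely when $\frac{2}{q}=1-\frac{2}{r}$ with $0<\frac{2}{q}<1$ --- that is, the open part of the admissibility condition \eqref{admiss}. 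The left endpoint $\frac{2}{q}=0$ (so $q=\infty$, $r=2$) is the energy estimate and is immediate from $L^2$-unitarity, while the strict inequality $\frac{2}{q}<1$ rules out the forbidden endpoint $(q,r)=(2,\infty)$, so one never leaves the range where Hardy--Littlewood--Sobolev applies and no endpoint refinement is needed. Taking square roots in $\|T\|^2=\|TT^*\|$ then yields the homogeneous estimate.

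For the inhomogeneous term I would exploit that the source sits in $L^1$ in time. By Duhamel's formula $v(t)=e^{it\Delta}v_0+i\int_0^t e^{i(t-s)\Delta}F(s)\,ds$, and Minkowski's inequality bounds the $L^q_tL^r_x$ norm of the Duhamel integral by $\int_0^T\|e^{i(t-s)\Delta}F(s)\|_{L^q_tL^r_x}\,ds$; applying the homogeneous estimate to each frozen $F(s)$ gives the bound $C\int_0^T\|F(s)\|_{L^2}\,ds=C\|F\|_{L^1([0,T],L^2)}$. Because the source lies in $L^1_t$, this direct argument avoids the Christ--Kiselev lemma otherwise needed for general inhomogeneous Strichartz estimates. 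Finally, since $\nabla$ commutes with $e^{it\Delta}$, the entire argument applied to $\nabla v_0$ and $\nabla F$ in place of $v_0$ and $F$ produces the same bound for $\|\nabla v\|_{L^q_tL^r_x}$; adding the two estimates gives \eqref{stri}.

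The main obstacle is the homogeneous estimate, i.e.\ the $TT^*$ step: the delicate point is to check that the exponent relation coming from the dispersive decay \eqref{Lrdecay} matches exactly the scaling required by Hardy--Littlewood--Sobolev, and that the admissibility condition \eqref{admiss} is precisely what guarantees both this match and the exclusion of the non-admissible endpoint. The remaining ingredients --- the dispersive estimate, the commutation of $\nabla$ with the propagator, and the $L^1_t$-source handling of the Duhamel term --- are then routine.
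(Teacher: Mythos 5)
Your proof is correct and is exactly the standard argument: the paper states this proposition without proof, citing Cazenave's book \cite{Ca}, where the result is established by the same route you take --- the $2$D dispersive estimate interpolated with $L^2$-unitarity, the $TT^*$ reduction to the one-dimensional Hardy--Littlewood--Sobolev inequality on the open admissible range with the pair $(\infty,2)$ handled by the energy estimate, the Duhamel term treated directly by Minkowski thanks to the $L^1_t H^1_x$ source (correctly avoiding Christ--Kiselev), and the $W^{1,r}$ upgrade by commuting $\nabla$ with the propagator. No gaps to report.
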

In particular, note that $(q,r)=(4,4)$ is an admissible Strichartz
pairs and $${W}^{1,4}(\R^2)\hookrightarrow {\mathcal C}^{1/2}(\R^2).$$

\section{Preliminary Results}

In order to prove Theorem \ref{EL}, we need the next lemma
\begin{lem}
\label{main lem}
Fix an initial value $u_0\in H^1(\R^2)$ with $H( u_0)<1$. Given $\omega\in \R$, denote by $u_\omega$ the maximal solution of \eqref{MainP}.
Let $U$ be the unique global solution of \eqref{LP}. Fix $0<l<\infty$ and suppose also that $u_\omega$ satisfies
\beq
\label{A1}
\underset{|\omega| \to \infty}{\mbox{lim sup}}\|u_{\omega}\|_{L^4((0,l), C^{\frac{1}{2}}(\R^2))}:=\underset{\xi \to \infty}{\mbox{lim}} \, \left(\underset{|\omega| \geqslant \xi}{\mbox{sup}} \|u_{\omega}\|_{L^4((0,l), C^{\frac{1}{2}}(\R^2))}\right) <\infty,
\eeq
and, for $|\omega|$ sufficiently large
\beq
\label{A2}
\sup_{t \in [0,l]}\Arrowvert \nabla u_{\omega}(t)\Arrowvert_{L^2}\leqslant A(l)<1.
\eeq
Then, for all admissible pairs $(q,r)$ we have
\begin{eqnarray*}
\Arrowvert u_{\omega}-U\Arrowvert_{L^q((0,l),W^{1,r}(\R^2))}\underset{\arrowvert \omega\arrowvert\rightarrow \infty}{\longrightarrow}0,
\end{eqnarray*}
\end{lem}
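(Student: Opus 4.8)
\emph{Proof strategy.} Write $w_\omega:=u_\omega-U$ and denote the nonlinearity by $f(u):=u\big(e^{4\pi|u|^2}-1\big)$. Subtracting the integral equations \eqref{T} and \eqref{ILP} and adding and subtracting $\theta(\omega s)f(U(s))$ gives
\[
w_\omega(t)=i\int_0^t e^{i(t-s)\Delta}\theta(\omega s)\big[f(u_\omega(s))-f(U(s))\big]\,ds+R_\omega(t),\qquad R_\omega(t):=i\int_0^t e^{i(t-s)\Delta}\big[\theta(\omega s)-I(\theta)\big]f(U(s))\,ds.
\]
The first integral will be absorbed by a Gronwall argument and the whole difficulty is to show that $R_\omega\to0$ in every Strichartz norm. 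Indeed, applying the Strichartz estimate (Proposition \ref{Strich}) on $(0,t)$ together with a local Lipschitz bound for $f$, one obtains for a controlling quantity $\|\cdot\|_{X_t}$ (e.g. $X_t=L^4((0,t),C^{1/2})\cap L^\infty((0,t),H^1)$)
\[
\|w_\omega\|_{X_t}\le C\,\|R_\omega\|_{L^4((0,l),W^{1,4})}+C\int_0^t a(s)\,\|w_\omega\|_{X_s}\,ds,
\]
where $a\in L^1(0,l)$; here the Lipschitz bound is produced as in \cite{JHDE} by writing $f(u_\omega)-f(U)=\int_0^1 Df(U+\sigma w_\omega)\,w_\omega\,d\sigma$ and estimating its $H^1$-norm through the Moser--Trudinger inequality (Proposition \ref{MT}) and the logarithmic inequality (Proposition \ref{LogEst}), the exponential factors staying bounded because $\|\nabla u_\omega(t)\|_{L^2},\|\nabla U(t)\|_{L^2}<1$ by \eqref{A2} and $H(U)<1$, and the time-weight $a$ being integrable thanks to the finiteness of $\|u_\omega\|_{L^4((0,l),C^{1/2})}$ and $\|U\|_{L^4((0,l),C^{1/2})}$ guaranteed by \eqref{A1}. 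The Gronwall-type lemma of the Appendix then yields $\|w_\omega\|_{X_l}\le C'\|R_\omega\|_{L^4((0,l),W^{1,4})}$, and a further use of Strichartz propagates the convergence to all admissible pairs. In particular $f(U)\in L^1((0,l),H^1)$.

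\emph{The oscillation term in $H^1$.} Set $\Theta(s):=\int_0^s\big(\theta(\sigma)-I(\theta)\big)\,d\sigma$, which is bounded: it is $\tau$-periodic because its increment over one period vanishes by the definition \eqref{aver} of $I(\theta)$. Writing $G(s):=e^{-is\Delta}f(U(s))$ we have $R_\omega(t)=i\,e^{it\Delta}\int_0^t\big(\theta(\omega s)-I(\theta)\big)G(s)\,ds$, so, $e^{it\Delta}$ being an $H^1$-isometry, $\|R_\omega(t)\|_{H^1}=\big\|\int_0^t(\theta(\omega s)-I(\theta))G(s)\,ds\big\|_{H^1}$ with $G\in L^1((0,l),H^1)$ by the previous step. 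Given $\eps>0$, choose $G_\eps\in C^1([0,l],H^1)$ with $\|G-G_\eps\|_{L^1((0,l),H^1)}<\eps$; the error is $\le 2\|\theta\|_{L^\infty}\eps$ uniformly in $t$ and $\omega$, while for $G_\eps$ an integration by parts in $s$, using $\theta(\omega s)-I(\theta)=\omega^{-1}\frac{d}{ds}\Theta(\omega s)$ and $\Theta(0)=0$, gives a bound $C_\eps/|\omega|$. Hence $\sup_{t\in[0,l]}\|R_\omega(t)\|_{H^1}\to0$; crucially, the primitive is here paired with the $H^1$-valued smooth function $G_\eps$, so no spatial derivative is lost.

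\emph{Upgrade to the Strichartz norm and main obstacle.} For $q<\infty$ fix a number $N$ of subintervals $I_k=[t_{k-1},t_k]$ with $|I_k|=l/N$. On each $I_k$, writing Duhamel from $t_{k-1}$ and applying Proposition \ref{Strich} gives $\|R_\omega\|_{L^q(I_k,W^{1,r})}\le C\|R_\omega(t_{k-1})\|_{H^1}+C\int_{I_k}|\theta(\omega s)-I(\theta)|\,\|f(U(s))\|_{H^1}\,ds$, whence, summing the $q$-th powers,
\[
\|R_\omega\|_{L^q((0,l),W^{1,r})}^q\le C\sum_{k=1}^N\|R_\omega(t_{k-1})\|_{H^1}^q+C\Big(\sup_{|E|\le l/N}\int_E\|f(U)\|_{H^1}\Big)^{q-1}\|f(U)\|_{L^1((0,l),H^1)}.
\]
Since $q>1$ and $s\mapsto\|f(U(s))\|_{H^1}$ is integrable, the second term tends to $0$ as $N\to\infty$ uniformly in $\omega$, by absolute continuity of the integral. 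Thus, given $\eps>0$, one first fixes $N$ so large that this term is $<\eps^q$, and then takes $|\omega|$ large so that the finitely many endpoint terms $\|R_\omega(t_{k-1})\|_{H^1}$ are small by the preceding paragraph; this gives $\|R_\omega\|_{L^q((0,l),W^{1,r})}\to0$. The endpoint $L^\infty((0,l),H^1)$, and hence $C([0,l];H^1)$ convergence, is already contained in the $H^1$-estimate. The main obstacle is precisely this last upgrade: the naive Strichartz bound $\|R_\omega\|_{L^qW^{1,r}}\lesssim\|(\theta(\omega\cdot)-I(\theta))f(U)\|_{L^1H^1}$ ignores the cancellation and does not vanish, while integrating by parts directly inside the Duhamel integral would lose two spatial derivatives on $f(U)$ through the factor $\Delta e^{i(t-s)\Delta}$; the subinterval decomposition circumvents both by using the $H^1$-smallness only at finitely many times and exploiting $q>1$ together with the absolute continuity of $\int\|f(U)\|_{H^1}$ to absorb the source.
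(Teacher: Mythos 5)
Your overall architecture coincides with the paper's: split $u_\omega-U$ via Duhamel into a Lipschitz-difference term and the oscillation term $R_\omega$, kill $R_\omega$ using the averaging of $\theta$, and absorb the difference term. Where you genuinely diverge is in the treatment of $R_\omega$: the paper simply quotes Lemma \ref{a} from Cazenave--Scialom \cite{Cazenave} for the convergence $\int_{t_0}^t\theta(\omega s)e^{i(t-s)\Delta}f\,ds\to I(\theta)\int_{t_0}^te^{i(t-s)\Delta}f\,ds$ in all Strichartz norms, whereas you reprove it: the bounded periodic primitive $\Theta$, the conjugation $G(s)=e^{-is\Delta}f(U(s))$, density of $C^1([0,l],H^1)$ in $L^1((0,l),H^1)$, integration by parts for the $L^\infty_tH^1_x$ bound, and then the subinterval/absolute-continuity upgrade to $L^qW^{1,r}$ for $q<\infty$. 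That argument is correct and self-contained (modulo checking $f(U)\in L^1((0,l),H^1)$, which does follow from the Moser--Trudinger and log estimates under $H(u_0)<1$), and it is a real addition relative to the paper, which applies the quoted lemma separately to $f(U)$ and $\nabla[f(U)]$ in the dual Strichartz space $L^{4/3}_tL^{4/3}_x$ furnished by Proposition \ref{prop2}.

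The soft spot is your absorption step. You claim the estimate $\|w_\omega\|_{X_t}\le C\|R_\omega\|_{L^4W^{1,4}}+C\int_0^ta(s)\|w_\omega\|_{X_s}\,ds$ with $a\in L^1$, and then invoke a Gronwall lemma. But the nonlinear estimates (the analogues of \eqref{Loc1}--\eqref{Loc2} after H\"older) do not produce an integral of a pointwise-in-time weight against $\|w_\omega\|_{X_s}$; they produce \emph{products of interval norms}, e.g. $\|\nabla w_\omega\|_{L^4((0,t),L^4)}\cdot Y_\omega(0,t)$ with $Y_\omega(0,t)$ a fixed (bounded, not small) quantity built from $\|e^{4\pi|u_\omega|^2}-1\|_{L^2_tL^2_x}$. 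Such a term is self-referential with an $O(1)$ coefficient: it can be neither absorbed nor fed into a classical Gronwall inequality, and moreover $\|\nabla w_\omega(s)\|_{L^4_x}$ is not dominated by the $C^{1/2}$ and $H^1$ components of $\|w_\omega\|_{X_s}$. Note also that the Appendix contains no integral Gronwall lemma, only the partition Lemma \ref{dec} and the bootstrap Lemma \ref{cont}. The correct repair is exactly what the paper does: use Lemma \ref{dec} to split $[0,l]$ into $J$ subintervals ($J$ bounded independently of $\omega$ thanks to \eqref{A1}) on which $X_{\omega,j}+Z_{\omega,j}\le\frac12$ and $Y_{\omega,j}\le\frac16$, absorb on each piece by successively taking $(q,r)=(\infty,2)$ and $(4,4)$, and propagate the endpoint data $\|u_\omega(t_j)-U(t_j)\|_{H^1}$ by induction with explicitly controlled constants $a_j,b_j$. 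Your argument becomes complete once the continuous Gronwall claim is replaced by this discrete absorption, which you already half-gesture at by citing the Appendix; as written, though, the stated integral inequality is not available.
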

The proof of Lemma \ref{main lem} is based on the Strichartz's estimate, the logarithmic and Moser-Trudinger inequalities and the fact that when $\arrowvert \omega\arrowvert$ approaches infinity, $\theta$ approaches its average.
This last observation is made more precisely as follows.
\begin{lem}
\label{a}
Let $(\gamma,\rho)$ be an admissible pairs and fix a time $t_0$. Given $f\in L^{\gamma'}(\R, L^{\rho'}(\R^2))$, we have
$$ \int_{t_0}^t \theta(\omega s) e^{i(t-s)\Delta}f(s)ds\underset{\arrowvert \omega\arrowvert\rightarrow \infty}{\longrightarrow} I(\theta)\int_{t_0}^te^{i(t-s)\Delta}f(s)ds \quad \mbox{in} \quad L^q(\R,L^r(\R^2)),
$$
for every admissible pairs $(q,r)$.
\end{lem}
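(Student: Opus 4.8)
The plan is to recast the statement as a strong-operator convergence on $L^q(\R,L^r)$ and to exploit the fact that a rescaled periodic function has a uniformly small antiderivative. Set $\psi_\omega(s):=\theta(\omega s)-I(\theta)$, so that the claim is equivalent to
\[
R_\omega f(t):=\int_{t_0}^t\psi_\omega(s)\,e^{i(t-s)\Delta}f(s)\,ds\;\xrightarrow[|\omega|\to\infty]{}\;0\quad\text{in }L^q(\R,L^r(\R^2)).
\]
Since $\|\psi_\omega\|_{L^\infty}\le 2\|\theta\|_{L^\infty}$ uniformly in $\omega$, the inhomogeneous Strichartz estimate (Proposition \ref{Strich} in its dual-admissible form, see \cite{Ca}) gives
\[
\|R_\omega f\|_{L^q(\R,L^r)}\le C\,\|\psi_\omega f\|_{L^{\gamma'}(\R,L^{\rho'})}\le 2C\|\theta\|_{L^\infty}\|f\|_{L^{\gamma'}(\R,L^{\rho'})},
\]
so the family $\{R_\omega\}$ is equibounded from $L^{\gamma'}(\R,L^{\rho'})$ into $L^q(\R,L^r)$. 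By a standard $3\eps$ argument it therefore suffices to prove $R_\omega f\to0$ for $f$ in a dense subset; I would take finite linear combinations of tensor products $f=g\otimes\phi$ with $g\in C^1_c(\R)$ and $\phi\in\mathcal S(\R^2)$, which are dense in $L^{\gamma'}(\R,L^{\rho'})$, and reduce by linearity to a single such $f$.

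The key quantitative input is the following averaging fact. Let $\Psi(u):=\int_0^u(\theta(v)-I(\theta))\,dv$. Periodicity of $\theta$ and the very definition of $I(\theta)$ give $\int_u^{u+\tau}(\theta-I(\theta))\,dv=0$, so $\Psi$ is $\tau$-periodic and continuous, hence bounded: $\|\Psi\|_{L^\infty}=:C_1<\infty$. Changing variables,
\[
\Theta_\omega(s):=\int_{t_0}^s\psi_\omega(\sigma)\,d\sigma=\frac1\omega\big(\Psi(\omega s)-\Psi(\omega t_0)\big),\qquad |\Theta_\omega(s)|\le\frac{2C_1}{|\omega|}\ \ \text{for all }s.
\]
Thus the oscillation of $\theta(\omega\cdot)$ around its mean is encoded in an antiderivative that is uniformly $O(1/|\omega|)$.

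With $f=g\otimes\phi$, I would integrate by parts in $s$, writing $\psi_\omega=\Theta_\omega'$ and using $\Theta_\omega(t_0)=0$ together with $\frac{d}{ds}e^{i(t-s)\Delta}=-i\Delta\,e^{i(t-s)\Delta}$:
\[
R_\omega f(t)=\Theta_\omega(t)g(t)\phi-\int_{t_0}^t\Theta_\omega(s)g'(s)\,e^{i(t-s)\Delta}\phi\,ds+i\int_{t_0}^t\Theta_\omega(s)g(s)\,e^{i(t-s)\Delta}(\Delta\phi)\,ds.
\]
The boundary term has $L^q(\R,L^r)$ norm at most $\tfrac{2C_1}{|\omega|}\|g\|_{L^q}\|\phi\|_{L^r}$, while the two integrals are inhomogeneous Duhamel terms with sources $\Theta_\omega g'\phi$ and $\Theta_\omega g\,\Delta\phi$, each of size $O(1/|\omega|)$ in $L^{\gamma'}(\R,L^{\rho'})$; applying the inhomogeneous Strichartz estimate once more bounds them by $O(1/|\omega|)$ in $L^q(\R,L^r)$ for every admissible $(q,r)$ simultaneously. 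Hence $\|R_\omega f\|_{L^q(\R,L^r)}=O(1/|\omega|)\to0$ on the dense class, and the density/equiboundedness reduction concludes the proof.

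I expect the main obstacle to be exactly the passage from the soft averaging statement---$\theta(\omega\cdot)\rightharpoonup I(\theta)$ only weak-$*$ in $L^\infty$, which by itself yields convergence only against fixed $L^1$ test functions---to genuine strong norm convergence in $L^q(\R,L^r)$, uniformly over all admissible output pairs (including the endpoint $q=\infty$, $r=2$). The integration-by-parts device circumvents this by converting the oscillation into the explicit gain $\Theta_\omega=O(1/|\omega|)$, after which the Strichartz estimate handles all target pairs at once; the only points needing care are the extra regularity on the dense class ($g\in C^1_c$, $\phi\in\mathcal S$) required to justify the integration by parts and to place $g'$ and $\Delta\phi$ in the relevant dual-Strichartz space.
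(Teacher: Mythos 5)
Your argument is correct, but note that the paper does not actually prove Lemma \ref{a}: it simply defers to \cite{Cazenave}, where the corresponding statement is established by a softer route --- equiboundedness of the Duhamel operators via Strichartz, reduction by density to a nice class of $f$, and then the weak-$*$ convergence $\theta(\omega\cdot)\rightharpoonup I(\theta)$ in $L^\infty$ combined with an equicontinuity/compactness argument to upgrade to strong convergence, with no rate. Your proof replaces that last, genuinely delicate step by the integration-by-parts device built on the bounded $\tau$-periodic antiderivative $\Psi$, which converts the oscillation into the explicit factor $\Theta_\omega=O(1/|\omega|)$ and yields a quantitative decay $\|R_\omega f\|_{L^qL^r}=O(1/|\omega|)$ on the dense class of tensor products $g\otimes\phi$, $g\in C^1_c(\R)$, $\phi\in\mathcal S(\R^2)$; the $3\eps$ argument then gives the general case. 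All the ingredients check out: since every admissible pair of the paper has $q>2$ and $r<\infty$, the dual exponents satisfy $\gamma'\in[1,2)$ and $\rho'\in(1,2]$, so your dense class really is dense; $\Theta_\omega(t_0)=0$ makes the lower boundary term vanish; and the two Duhamel terms produced by the integration by parts have sources of size $O(1/|\omega|)$ in $L^1(\R,L^2)$, which the retarded Strichartz estimate sends into every $L^q(\R,L^r)$ simultaneously, including the endpoint $(\infty,2)$. The only point worth flagging is that you invoke the inhomogeneous Strichartz estimate with a general dual-admissible source space $L^{\gamma'}L^{\rho'}$, which is stronger than the $L^1_tH^1_x$ version recorded in Proposition \ref{Strich}, but this is indeed available in \cite{Ca}. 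In short: your proof is a valid, self-contained, and even sharper (rate-producing) alternative to the cited one.
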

\begin{proof}
See \cite{Cazenave}.
\end{proof}
The next lemma will also be used in the sequel.
\begin{lem}
\label{conv}
Set $f(u):= u(e^{4\pi\arrowvert u\arrowvert^2}-1).$ Then, for any $\varepsilon>0$, there exists a constant $C_\varepsilon>0$ such that
\begin{equation}
\label{Loc1}
|f(u)-f(v)|\leq C_\varepsilon |u-v|\bigg({\rm e}^{4\pi(1+\varepsilon)|u|^2}-1+ {\rm e}^{4\pi(1+\varepsilon)|v|^2}-1\bigg);
\end{equation}
and
\begin{eqnarray}
\label{Loc2}
|(Df)(u)-(Df)(v)|&\leq& C_\varepsilon |u-v|\bigg(|u|+{\rm e}^{4\pi(1+\varepsilon)|u|^2}-1+ |v|+{\rm e}^{4\pi(1+\varepsilon)|v|^2}-1\bigg).
\end{eqnarray}
\end{lem}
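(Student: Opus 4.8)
The plan is to deduce both \eqref{Loc1} and \eqref{Loc2} from pointwise bounds on the first and second real derivatives of $f$, obtained through the fundamental theorem of calculus along the segment $w_t:=v+t(u-v)$, $t\in[0,1]$, and then to absorb the polynomial prefactors into the enlarged exponent $4\pi(1+\varepsilon)$ using the free parameter $\varepsilon$. Writing $f(u)-f(v)=\int_0^1 Df(w_t)\cdot(u-v)\,dt$, one gets $|f(u)-f(v)|\le |u-v|\sup_{0\le t\le 1}|Df(w_t)|$, and similarly $|(Df)(u)-(Df)(v)|\le |u-v|\sup_{0\le t\le 1}|D^2f(w_t)|$; thus everything reduces to estimating $|Df|$ and $|D^2f|$ at a single point.

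To compute these, I regard $f(u)=u(e^{4\pi u\bar u}-1)$ and use the Wirtinger derivatives. A direct computation gives $\partial_u f=(e^{4\pi|u|^2}-1)+4\pi|u|^2e^{4\pi|u|^2}$ and $\partial_{\bar u}f=4\pi u^2 e^{4\pi|u|^2}$, so that $|Df(u)|\lesssim (e^{4\pi|u|^2}-1)+|u|^2e^{4\pi|u|^2}$. Differentiating once more produces only terms of the form $|u|e^{4\pi|u|^2}$ and $|u|^3e^{4\pi|u|^2}$, whence $|D^2f(u)|\lesssim |u|e^{4\pi|u|^2}+|u|^3e^{4\pi|u|^2}$.

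The key elementary step, and the only place where the strict gap $\varepsilon>0$ is used, is the inequality: for every $\varepsilon>0$ and every integer $m\ge1$ there is $C_{\varepsilon,m}>0$ with $s^m e^{4\pi s}\le C_{\varepsilon,m}\big(e^{4\pi(1+\varepsilon)s}-1\big)$ for all $s\ge0$. Indeed, the quotient $s^m e^{4\pi s}/(e^{4\pi(1+\varepsilon)s}-1)$ is continuous on $(0,\infty)$, tends to $0$ as $s\to0^+$ for $m\ge2$ (and to a finite limit for $m=1$), and behaves like $s^m e^{-4\pi\varepsilon s}\to0$ as $s\to\infty$, hence is bounded. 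Applying this with $s=|u|^2$ (and using $e^{4\pi|u|^2}-1\le e^{4\pi(1+\varepsilon)|u|^2}-1$ for the purely exponential term) yields $|Df(u)|\le C_\varepsilon\big(e^{4\pi(1+\varepsilon)|u|^2}-1\big)$. For the second derivative, I treat $|u|\le1$ and $|u|\ge1$ separately: when $|u|\le1$ the prefactors are harmless and $|u|e^{4\pi|u|^2}+|u|^3 e^{4\pi|u|^2}\le C|u|$, while for $|u|\ge1$ the bound $|u|^3\le|u|^4=(|u|^2)^2$ together with the inequality above (cases $m=1,2$) gives $|u|e^{4\pi|u|^2}+|u|^3 e^{4\pi|u|^2}\le C_\varepsilon(e^{4\pi(1+\varepsilon)|u|^2}-1)$; altogether $|D^2f(u)|\le C_\varepsilon\big(|u|+e^{4\pi(1+\varepsilon)|u|^2}-1\big)$.

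Finally I would pass from the single-point bounds to the symmetric right-hand sides. Since $|w_t|=|(1-t)v+tu|\le\max(|u|,|v|)$ and both $r\mapsto r$ and $r\mapsto e^{4\pi(1+\varepsilon)r^2}-1$ are nondecreasing, the supremum over $t$ is controlled by the value at $\max(|u|,|v|)$; and because $\max(|u|,|v|)$ equals either $|u|$ or $|v|$ one has $e^{4\pi(1+\varepsilon)\max(|u|,|v|)^2}-1\le (e^{4\pi(1+\varepsilon)|u|^2}-1)+(e^{4\pi(1+\varepsilon)|v|^2}-1)$ and $\max(|u|,|v|)\le |u|+|v|$. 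Combining these with the two reductions of the first paragraph gives exactly \eqref{Loc1} and \eqref{Loc2}. The only genuine difficulty is the absorption inequality of the third paragraph, where the strict inequality $\varepsilon>0$ is essential; the remaining steps are a routine differentiation and a convexity argument for the segment.
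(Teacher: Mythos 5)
Your proof is correct. The paper itself gives no argument for this lemma --- the ``proof'' is simply a citation to \cite{JHDE} --- and your route (fundamental theorem of calculus along the segment $w_t=v+t(u-v)$, pointwise bounds on $Df$ and $D^2f$, and the absorption inequality $s^m e^{4\pi s}\leq C_{\varepsilon,m}\big(e^{4\pi(1+\varepsilon)s}-1\big)$ for $s\geq 0$, which is exactly where the strict gap $\varepsilon>0$ is spent) is the standard argument from that reference, so this is essentially the same approach, here written out in full.
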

\begin{proof}
See \cite{JHDE}.
\end{proof}
For the proof of theorem \ref{EL}, the following refined estimates will be needed later on.
\begin{prop}
\label{prop2}
Suppose that $u_{\omega}$ satisfies \eqref{A2}, and let $[a,b]$ be a sub-interval of $[0,l]$. Then
$$
\|u_{\omega}(e^{4\pi\arrowvert u_{\omega}\arrowvert^2}-1)\|_{L^{\frac{4}{3}}((a,b), L^{\frac{4}{3}})}
\leqslant C(l) \|u_{\omega}\|_{L^4((a,b),W^{1,4})} \left(\|u_{\omega}\|_{_{L^4((a,b),W^{1,4})}}^{2}+\|u_{\omega}\|_{_{L^4((a,b),W^{1,4})}}^{4}\right)^{\alpha}
$$
and
$$
\|\nabla \left(u_{\omega}(e^{4\pi\arrowvert u_{\omega}\arrowvert^2}-1) \right)\|_{L^{\frac{4}{3}}((a,b), L^{\frac{4}{3}})}
\leqslant C(l) \|u_{\omega}\|_{L^4((a,b),W^{1,4})} \left(\|u_{\omega}\|_{_{L^4((a,b),W^{1,4})}}^{2}+\|u_{\omega}\|_{_{L^4((a,b),W^{1,4})}}^{4}\right)^{\beta}
$$
where $\alpha, \beta >0$ depend on $A(l)$.
\end{prop}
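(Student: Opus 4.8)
The plan is to estimate the nonlinearity $f(u_\omega)=u_\omega(e^{4\pi|u_\omega|^2}-1)$ and its gradient in the dual Strichartz space $L^{4/3}_t L^{4/3}_x$ by exploiting the pointwise bounds from Lemma \ref{conv} (with $v=0$) together with the Moser--Trudinger inequality (Proposition \ref{MT}) and the admissible embedding $W^{1,4}(\R^2)\hookrightarrow \mathcal{C}^{1/2}(\R^2)$. The crucial structural input is the hypothesis \eqref{A2}, namely $\sup_{t}\|\nabla u_\omega(t)\|_{L^2}\le A(l)<1$, which creates the room (a gap between $A(l)$ and $1$) needed to apply the sharp Trudinger--Moser bound with an exponential constant strictly below the critical threshold $4\pi$.

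First I would fix a time $t$ and work on the spatial integral. Taking $v=0$ in \eqref{Loc1} gives the pointwise bound $|f(u_\omega)|\le C_\varepsilon|u_\omega|\big(e^{4\pi(1+\varepsilon)|u_\omega|^2}-1\big)$. I would then raise this to the power $4/3$ and apply H\"older in $x$ to split off the factor $|u_\omega|^{4/3}$ (controlled in some $L^p_x$) from the exponential factor. To handle $\|e^{4\pi(1+\varepsilon)|u_\omega|^2}-1\|_{L^s_x}$ I would rescale: writing $\mu(t)=\|\nabla u_\omega(t)\|_{L^2}\le A(l)$, the function $w=u_\omega/\mu$ satisfies $\|\nabla w\|_{L^2}\le 1$, and the exponent $4\pi(1+\varepsilon)\mu^2 s$ must be pushed below $4\pi$ by choosing $\varepsilon$ and $s$ close to $1$; since $A(l)<1$, one can fix $\varepsilon>0$ small and $s>1$ close to $1$ so that $(1+\varepsilon)A(l)^2 s<1$, making Proposition \ref{MT} applicable. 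This yields a bound of the form $\|e^{4\pi(1+\varepsilon)|u_\omega|^2}-1\|_{L^s_x}\le C\|u_\omega\|_{L^2}^{2/s}$, where the mass is conserved and hence controlled. The polynomial factor $\|u_\omega\|^2_{L^4_x}+\|u_\omega\|^4_{L^4_x}$ with exponent $\alpha$ then emerges from collecting these pieces and from using the $L^\infty_x$ control supplied by the log estimate or directly by the $\mathcal{C}^{1/2}$ embedding. Integrating the resulting time-dependent bound in $t$ over $(a,b)$ with H\"older in time produces the $L^4_t W^{1,4}_x$ norm on the right-hand side, with $C(l)$ absorbing the $\varepsilon,s$-dependent constants and the conserved mass.

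For the gradient estimate I would compute $\nabla f(u_\omega)=(Df)(u_\omega)\cdot\nabla u_\omega$, bound $|(Df)(u_\omega)|$ via \eqref{Loc2} with $v=0$ by $C_\varepsilon\big(|u_\omega|+e^{4\pi(1+\varepsilon)|u_\omega|^2}-1\big)$, and then repeat the same H\"older/Trudinger--Moser scheme, now pairing the exponential factor against $\|\nabla u_\omega\|_{L^4_x}$. The exponent $\beta$ arises analogously; the slight difference between $\alpha$ and $\beta$ reflects the different H\"older exponents forced by whether one factors out $|u_\omega|^{4/3}$ or $|\nabla u_\omega|^{4/3}$. Both exponents depend on $A(l)$ precisely through the admissible range of $s$ (and correspondingly $\varepsilon$), which tightens as $A(l)\uparrow 1$.

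The main obstacle will be the careful bookkeeping of the H\"older exponents so that the exponential factor lands in an $L^s_x$ with $s$ small enough to keep the effective Trudinger--Moser constant below $4\pi$, while simultaneously leaving the remaining factors expressible purely in terms of $\|u_\omega\|_{W^{1,4}_x}$ and the conserved mass. In particular I must verify that the gap $1-(1+\varepsilon)A(l)^2 s>0$ can indeed be maintained for some $s>1$, which is exactly where the strict inequality $A(l)<1$ is indispensable; if $A(l)=1$ were allowed, the subcritical Trudinger--Moser estimate would fail and the argument would break down. The final packaging into the stated product form, with the time integration handled by H\"older so that only the $L^4_t W^{1,4}_x$ norm appears, is then routine.
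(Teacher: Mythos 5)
There is a genuine gap in your treatment of the exponential factor. Your plan is to place $e^{4\pi(1+\varepsilon)|u_\omega|^2}-1$ in some $L^s_x$ with $s>1$ \emph{close to} $1$, so that $(1+\varepsilon)sA(l)^2<1$ and Proposition \ref{MT} applies after rescaling. But the H\"older bookkeeping you defer to the end does not allow $s$ to be close to $1$: to land the product in $L^{4/3}_x$ you need $\tfrac34=\tfrac1p+\tfrac1s$, and since the companion factor satisfies $p\ge 1$ (at best $p=\infty$ for $|u_\omega|$, and necessarily $p=4$ for $|\nabla u_\omega|$, which is not in $L^\infty_x$), you are forced into $s\ge 4/3$ for the first estimate and $s=2$ for the gradient estimate. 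The condition $(1+\varepsilon)sA(l)^2<1$ then requires $A(l)^2<3/4$, respectively $A(l)^2<1/2$, which is strictly stronger than the hypothesis \eqref{A2} that only gives $A(l)<1$. So the direct Moser--Trudinger route closes only for small data and breaks down precisely in the regime the proposition is meant to cover.

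The paper's proof gets around this with a pointwise interpolation that you mention only in passing: it writes $e^{c|u_\omega|^2}-1\le\bigl(e^{c\|u_\omega(t)\|_{L^\infty_x}^2}-1\bigr)^{1/2}\bigl(e^{c|u_\omega|^2}-1\bigr)^{1/2}$, applies Moser--Trudinger only to the $L^1_x$ half (where the exponent constraint is just $A(l)^2<1$, available from \eqref{A2} and mass conservation), and controls the $L^\infty_x$ half by the logarithmic inequality of Proposition \ref{LogEst}. The log estimate is the load-bearing step, not an optional alternative to the $\mathcal{C}^{1/2}$ embedding: the embedding alone would give $e^{c\|u_\omega\|_{\mathcal{C}^{1/2}}^2}$, which is not integrable in time, whereas the log estimate converts the exponential of the sup-norm into a power $(1+\|u_\omega(t)\|_{\mathcal{C}^{1/2}})^{\gamma}$ with $\gamma=2\pi(1+\epsilon)\lambda A'(l,\mu)^2<2$ --- and the room to choose $\mu$, $\lambda$, $\epsilon$ making $\gamma<2$ is exactly where $A(l)<1$ enters. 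That sub-quadratic power is then integrated in time against the $L^4_t\mathcal{C}^{1/2}_x$ norm (split further according to whether $\|u_\omega(t)\|_{L^\infty_x}$ exceeds $1$), producing the polynomial factor $\bigl(\|u_\omega\|^2+\|u_\omega\|^4\bigr)^{\alpha}$ in the statement. Your write-up needs this interpolation-plus-log-estimate mechanism to be the core of the argument rather than a fallback, otherwise the exponents do not close.
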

\begin{rem}
We note that, from the Strichartz's estimate, if $u_{\omega}$ exists on $(a,b)$ then it belongs to the space $L^4((a,b),W^{1,4})$.
\end{rem}
\begin{proof}

We begin by estimating $\|u_{\omega}(e^{4\pi\arrowvert u_{\omega}\arrowvert^2}-1)\|_{L^{\frac{4}{3}}((a,b), L^{\frac{4}{3}})}$.
Using H\"older inequality in space and time we get
$$
\|u_{\omega}(e^{4\pi\arrowvert u_{\omega}\arrowvert^2}-1)\|_{L^{\frac{4}{3}}((a,b), L^{\frac{4}{3}})}
\leq \|u_{\omega}\|_{L^4((a,b),L^4)} \|e^{4\pi\| u_{\omega}(t,\cdot)\|_{L^{\infty}_x}^2}-1\|_{L^{\frac{2}{\gamma}}(a,b)}^{\frac{1}{2}}  \|e^{4\pi\arrowvert u_{\omega}\arrowvert^2}-1\|_{L^{\frac{2}{2-\gamma}}((a,b),L^1)}^{\frac{1}{2}}
$$
where $0<\gamma<2$ is to be chosen suitably.\\
The assumption on $u_{\omega}$, Moser-Trudinger inequality and the conservation of mass give
$$
\|e^{4\pi\arrowvert u_{\omega}\arrowvert^2}-1\|_{L^{\frac{2}{2-\gamma}}((a,b),L^1)}^{\frac{1}{2}} \leqslant C l^{\frac{2-\gamma
}{4}} \|u_0\|_{L^2}.
$$
Now, write
\begin{eqnarray*}
\|e^{4\pi\| u_{\omega}(t,\cdot)\|_{L^{\infty}_x}^2}-1\|_{L^{\frac{2}{\gamma}}(a,b)}^{\frac{2}{\gamma}}&=& \int_{ \{t\in [a,b]\, ; \, \| u_{\omega}(t,\cdot)\|_{L^{\infty}_x} \leqslant 1\}}\left(e^{4\pi\| u_{\omega}(t,\cdot)\|_{L^{\infty}_x}^2}-1\right)^{\frac{2}{\gamma}} dt\\
&+&\int_{ \{t\in [a,b]\, ; \, \| u_{\omega}(t,\cdot)\|_{L^{\infty}_x} > 1\}} \left(e^{4\pi\| u_{\omega}(t,\cdot)\|_{L^{\infty}_x}^2}-1\right)^{\frac{2}{\gamma}} dt\\
&\leqslant& \int_{ \{t\in [a,b]\, ; \, \| u_{\omega}(t,\cdot)\|_{L^{\infty}_x} \leqslant 1\}}\left(e^{4\pi\| u_{\omega}(t,\cdot)\|_{L^{\infty}_x}^2}-1\right)^{\frac{2}{\gamma}} dt\\
&+& \int_{ \{t\in [a,b]\, ; \, \| u_{\omega}(t,\cdot)\|_{L^{\infty}_x} > 1\}} \left(e^{2\pi\| u_{\omega}(t,\cdot)\|_{L^{\infty}_x}^2}\right)^{\frac{4}{\gamma}} dt
\end{eqnarray*}
It can easily be shown that
$$
\int_{ \{t\in [a,b]\, ; \, \| u_{\omega}(t,\cdot)\|_{L^{\infty}_x} \leqslant 1\}}\left(e^{4\pi\| u_{\omega}(t,\cdot)\|_{L^{\infty}_x}^2}-1\right)^{\frac{2}{\gamma}} dt \leqslant C(\gamma) l^{\frac{1}{2}} \|u_{\omega}\|_{_{L^4((a,b),C^{\frac{1}{2}})}}^{2}.
$$

Indeed, let $t\in (a,b)$ be such that $\| u_{\omega}(t,\cdot)\|_{L^{\infty}_x} \leqslant 1$.  We have
\[e^{4\pi\| u_{\omega}(t,\cdot)\|_{L^{\infty}_x}^2}-1=\psi(\| u_{\omega}(t,\cdot)\|_{L^{\infty}_x})-\psi(0) \leq \bigg \lbrace\underset{s \in [0,\| u_{\omega}(t,\cdot)\|_{L^{\infty}_x}]}{\textit{sup}}|\psi'(s)|\bigg \rbrace \| u_{\omega}(t,\cdot)\|_{L^{\infty}_x}^2,\]
where $\psi(s):=e^{4\pi s }$. Note that, for all $s \in [0,\| u_{\omega}(t,\cdot)\|_{L^{\infty}_x}]$, $0\leq \psi'(s)\leq 4 \pi e^{4 \pi}:=C$. Therefore
\[\int_{ \{t\in [a,b]\, ; \, \| u_{\omega}(t,\cdot)\|_{L^{\infty}_x} \leqslant 1\}}\left(e^{4\pi\| u_{\omega}(t,\cdot)\|_{L^{\infty}_x}^2}-1\right)^{\frac{2}{\gamma}} dt \leq C^{\frac{2}{\gamma}} \int_{ \{t\in [a,b]\, ; \, \| u_{\omega}(t,\cdot)\|_{L^{\infty}_x} \leqslant 1\}} \| u_{\omega}(t,\cdot)\|_{L^{\infty}_x}^{\frac{2}{\gamma}} dt.\]

Since $\frac{4}{\gamma} \geq 2$ and $C^{\frac{1}{2}} \hookrightarrow L^{\infty}$, we get
\begin{align*}
\int_{ \{t\in [a,b]\, ; \, \| u_{\omega}(t,\cdot)\|_{L^{\infty}_x} \leqslant 1\}}\left(e^{4\pi\| u_{\omega}(t,\cdot)\|_{L^{\infty}_x}^2}-1\right)^{\frac{2}{\gamma}} dt &\leq& C^{\frac{2}{\gamma}}\int_{ \{t\in [a,b]\, ; \, \| u_{\omega}(t,\cdot)\|_{L^{\infty}_x} \leqslant 1\}}\|u_{\omega}(t,\cdot)\|_{C^{\frac{1}{2}}}^{2} dt\\
&\leq& C^{\frac{2}{\gamma}} \int_0^l \|u_{\omega}(t,\cdot)\|_{C^{\frac{1}{2}}}^{2} dt.
\end{align*}
We conclude using the Cauchy-Schwarz inequality.\\

Let $\epsilon>0$ (to be chosen later). We have
$$
\int_{ \{t\in [a,b]\, ; \, \| u_{\omega}(t,\cdot)\|_{L^{\infty}_x} > 1\}} \left(e^{2\pi\| u_{\omega}(t,\cdot)\|_{L^{\infty}_x}^2}\right)^{\frac{4}{\gamma}} dt\leqslant  \int_{ \{t\in [a,b]\, ; \, \| u_{\omega}(t,\cdot)\|_{L^{\infty}_x} > 1\}} \left(e^{2\pi(1+\epsilon)\| u_{\omega}(t,\cdot)\|_{L^{\infty}_x}^2}\right)^{\frac{4}{\gamma}}
$$

The log estimate and the assumption on $u_{\omega}$  allow us to find a constant $0<\gamma<2$ as desired such that
$$
\int_{ \{t\in [a,b]\, ; \, \| u_{\omega}(t,\cdot)\|_{L^{\infty}_x} > 1\}} \left(e^{2\pi\| u_{\omega}(t,\cdot)\|_{L^{\infty}_x}^2}\right)^{\frac{4}{\gamma}} dt\leqslant C(l,\gamma) \|u_{\omega}\|_{_{L^4((a,b),C^{\frac{1}{2}})}}^{4}.
$$
Indeed, let $t\in [a,b]$ be such that $\| u_{\omega}(t,\cdot)\|_{L^{\infty}_x} > 1$. Write the Log-estimate with $\beta=\frac{1}{2}$, $\lambda>\frac{1}{\pi}$ and $\mu \in (0,1]$ ( the latter two parameters are to be chosen later)
\begin{equation*}
\|u_{\omega}(t,\cdot)\|^2_{L^\infty}\leq
\lambda\|u_{\omega}(t,\cdot)\|_\mu^2 \log\left(C_{\lambda} +\left(\frac{8}{\mu}\right)^{\frac{1}{2}}
\frac{\|u_{\omega}(t,\cdot)\|_{{\mathcal C}^{\frac{1}{2}}}}{\|u_{\omega}(t,\cdot)\|_\mu}\right).
\end{equation*}
Since $A(l)<1$, one can choose $\mu \in (0,1]$ (independently of $t$) such that $A'(l,\mu)^2:=A(l)^2+\mu^2 M^2(u_0)<1$. Therefore
\begin{equation}
\label{previous}
\|u_{\omega}(t,\cdot)\|_\mu \leq A'(l,\mu).
\end{equation}
Now, it remains to choose $\lambda$ suitably. Note that for fixed $t$ and $\lambda$, the function $x \mapsto x^2 \log\left(C_{\lambda} +\left(\frac{8}{\mu}\right)^{\frac{1}{2}}
\frac{\|u_{\omega}(t,\cdot)\|_{{\mathcal C}^{\frac{1}{2}}}}{x}\right)$ defined for $x>0$ is increasing, hence from \eqref{previous} one comes to
\begin{equation*}
\|u_{\omega}(t,\cdot)\|^2_{L^\infty}\leq \lambda
A'(l,\mu)^2 \log\left(C_{\lambda} +\left(\frac{8}{\mu}\right)^{\frac{1}{2}}
\frac{\|u_{\omega}(t,\cdot)\|_{{\mathcal C}^{\frac{1}{2}}}}{A'(l,\mu)}\right),
\end{equation*}
and then
\begin{align}
\label{rew}
\nonumber
e^{2\pi(1+\epsilon)\| u_{\omega}(t,\cdot)\|_{L^{\infty}_x}^2} &\leq& \left(C_{\lambda} +\left(\frac{8}{\mu}\right)^{\frac{1}{2}}
\frac{\|u_{\omega}(t,\cdot)\|_{{\mathcal C}^{\frac{1}{2}}}}{A'(l,\mu)}\right)^{2\pi(1+\epsilon)\lambda
A'(l,\mu)^2} \\
&\leq& C(\lambda, \mu, l)^{2\pi(1+\epsilon)\lambda
A'(l,\mu)^2} \left(1+\|u_{\omega}(t,\cdot)\|_{{\mathcal C}^{\frac{1}{2}}}\right)^{2\pi(1+\epsilon)\lambda
A'(l,\mu)^2}.
\end{align}
Since $A'(l,\mu)<1$ one can choose $\epsilon>0$ such that $1+\epsilon<\frac{1}{A'(l,\mu)^2}$ and $\lambda>\frac{1}{\pi}$ such that $\lambda< \frac{1}{(1+\epsilon)A'(l,\mu)}$. With all parameters fixe, we set $\gamma:=2\pi(1+\epsilon)\lambda
A'(l,\mu)^2$. Note that $0<\gamma<2$ as claimed. The estimate \eqref{rew} can be rewritten as follows
\[e^{2\pi(1+\epsilon)\| u_{\omega}(t,\cdot)\|_{L^{\infty}_x}^2} \leq C(l)\left(1+\|u_{\omega}(t,\cdot)\|_{{\mathcal C}^{\frac{1}{2}}}\right)^{\gamma}.\]
Integrating the above inequality yields
\[\int_{ \{t\in [a,b]\, ; \, \| u_{\omega}(t,\cdot)\|_{L^{\infty}_x} > 1\}} \left(e^{2\pi\| u_{\omega}(t,\cdot)\|_{L^{\infty}_x}^2}\right)^{\frac{4}{\gamma}} dt\leqslant C(l,\gamma) \int_{ \{t\in [a,b]\, ; \, \| u_{\omega}(t,\cdot)\|_{L^{\infty}_x} > 1\}} \left(1+\|u_{\omega}(t,\cdot)\|_{{\mathcal C}^{\frac{1}{2}}}\right)^4 dt.\]
We conclude using the fact that $\|u_{\omega}(t,\cdot)\|_{{\mathcal C}^{\frac{1}{2}}}\geq \| u_{\omega}(t,\cdot)\|_{L^{\infty}_x} > 1$.

At final, we get
$$
\|e^{4\pi\| u_{\omega}(t,\cdot)\|_{L^{\infty}_x}^2}-1\|_{L^{\frac{2}{\gamma}}(a,b)} \leqslant C(l) \left(\|u_{\omega}\|_{_{L^4((a,b),C^{\frac{1}{2}})}}^{2}+\|u_{\omega}\|_{_{L^4((a,b),C^{\frac{1}{2}})}}^{4}\right)^{\frac{\gamma}{2}}
$$
We note that when $\|u_{\omega}\|_{_{L^4((a,b),C^{\frac{1}{2}})}}\leqslant1$, the above estimate reduces to
$$
\|e^{4\pi\| u_{\omega}(t,\cdot)\|_{L^{\infty}_x}^2}-1\|_{L^{\frac{2}{\gamma}}(a,b)} \leqslant C(l) \|u_{\omega}\|_{_{L^4((a,b),C^{\frac{1}{2}})}}^{\gamma}.
$$
Therefore,
$$
\|u_{\omega}(e^{4\pi\arrowvert u_{\omega}\arrowvert^2}-1)\|_{L^{\frac{4}{3}}((a,b), L^{\frac{4}{3}})}
\leqslant C(l) \|u_{\omega}\|_{L^4((a,b),L^4)} \left(\|u_{\omega}\|_{_{L^4((a,b),C^{\frac{1}{2}})}}^{2}+\|u_{\omega}\|_{_{L^4((a,b),C^{\frac{1}{2}})}}^{4}\right)^{\frac{\gamma}{4}}
$$

The Sobolev injection $W^{1,4}(\mathbb{R}^2)\hookrightarrow C^{\frac{1}{2}}(\mathbb{R}^2)$ concludes the proof of the first estimate.\\

Let us establish an analogous estimate for $\|\nabla \left( u_{\omega}(e^{4\pi\arrowvert u_{\omega}\arrowvert^2}-1)\right)\|_{L^{\frac{4}{3}}((a,b), L^{\frac{4}{3}})}$.\\
Before doing so, a straightforward calculation give
$$
|\nabla \left( u_{\omega}(e^{4\pi\arrowvert u_{\omega}\arrowvert^2}-1)\right)|\leqslant C |\nabla u_{\omega}| \left(e^{4\pi\arrowvert u_{\omega}\arrowvert^2}-1+|u_{\omega}|^{2}e^{4\pi\arrowvert u_{\omega}\arrowvert^2} \right).
$$

H\"older inequality, the above identity and the conservation of mass for $u_{\omega}$ give
\begin{eqnarray*}
\|\nabla \left( u_{\omega}(e^{4\pi\arrowvert u_{\omega}\arrowvert^2}-1)\right)\|_{L^{\frac{4}{3}}((a,b), L^{\frac{4}{3}})} &\lesssim& \|\nabla u_{\omega}\|_{L^4((a,b),L^4)} \|e^{4\pi\arrowvert u_{\omega}\arrowvert^2}-1\|_{L^2((a,b),L^2)}\\
& & +\||\nabla u_{\omega}||u_{\omega}|^{2}e^{4\pi\arrowvert u_{\omega}\arrowvert^2} \|_{L^{\frac{4}{3}}((a,b), L^{\frac{4}{3}})}\\
\end{eqnarray*}
We will only deal with the second term, the other one was treated above.\\

Recall that for any $\epsilon>0$ and $x\geqslant0$
$$
xe^x\leqslant \frac{e^{(1+\epsilon)x}-1}{\epsilon}.
$$
So
\begin{eqnarray*}
\||\nabla u_{\omega}||u_{\omega}|^{2}e^{4\pi\arrowvert u_{\omega}\arrowvert^2} \|_{L^{\frac{4}{3}}_{x}} &\leqslant& C(\epsilon) \||\nabla u_{\omega}|\left(e^{4\pi(1+\epsilon)\arrowvert u_{\omega}\arrowvert^2}-1\right) \|_{L^{\frac{4}{3}}_{x}}\\
&\leqslant& C(\epsilon) \|\nabla u_{\omega}\|_{L^4_x} \|e^{4\pi(1+\epsilon)\arrowvert u_{\omega}\arrowvert^2}-1\|_{L^2_x}\\
&\leqslant& C(\epsilon) \|\nabla u_{\omega}\|_{L^4_x} \left(e^{4\pi(1+\epsilon)\|u_{\omega}\|_{L^{\infty}_x}^2}-1\right)^{\frac{1}{2}} \|e^{4\pi(1+\epsilon)\arrowvert u_{\omega}\arrowvert^2}-1\|_{L^1_x}^{\frac{1}{2}}\\
&\leqslant& C(\epsilon) \|\nabla u_{\omega}\|_{L^4_x} \left(e^{4\pi(1+\epsilon)\|u_{\omega}\|_{L^{\infty}_x}^2}-1\right)^{\frac{1}{2}} C(\epsilon,A) \|u_0\|_{L^2}
\end{eqnarray*}
where in the last line we used Moser-Trudinger inequality for $\epsilon>0$ such that $\epsilon<\frac{1}{A^2}-1$ (a priori condition on $\epsilon$). Therefore
\begin{eqnarray*}
\||\nabla u_{\omega}||u_{\omega}|^{2}e^{4\pi\arrowvert u_{\omega}\arrowvert^2} \|_{L^{\frac{4}{3}}_{t,x}} &\leqslant& C(\epsilon,A) \|\nabla u_{\omega}\|_{L^4((a,b),L^4)} \|e^{4\pi(1+\epsilon)\|u_{\omega}\|_{L^{\infty}_x}^2}-1\|_{L^1(a,b)}^{\frac{1}{2}}.
\end{eqnarray*}
Let $0<\delta<2$ (to be chosen later). H\"older inequality in time gives
\begin{eqnarray*}
\||\nabla u_{\omega}||u_{\omega}|^{2}e^{4\pi\arrowvert u_{\omega}\arrowvert^2} \|_{L^{\frac{4}{3}}_{x}} &\leqslant& C(\epsilon,A) l^{\frac{2-\delta}{4}}\|\nabla u_{\omega}\|_{L^4((a,b),L^4)} \|e^{4\pi(1+\epsilon)\|u_{\omega}\|_{L^{\infty}_x}^2}-1\|_{L^{\frac{2}{\delta}}(a,b)}^{\frac{1}{2}}
\end{eqnarray*}
Now, write
\begin{eqnarray*}
\|e^{4(1+\epsilon)\pi\| u_{\omega}(t,\cdot)\|_{L^{\infty}_x}^2}-1\|_{L^{\frac{2}{\delta}}(a,b)}^{\frac{2}{\delta}}&=& \int_{ \{t\in [a,b]\, ; \, \| u_{\omega}(t,\cdot)\|_{L^{\infty}_x} \leqslant 1\}}\left(e^{4\pi(1+\epsilon)\| u_{\omega}(t,\cdot)\|_{L^{\infty}_x}^2}-1\right)^{\frac{2}{\delta}} dt\\
&+&\int_{ \{t\in [a,b]\, ; \, \| u_{\omega}(t,\cdot)\|_{L^{\infty}_x} > 1\}} \left(e^{4\pi(1+\epsilon)\| u_{\omega}(t,\cdot)\|_{L^{\infty}_x}^2}-1\right)^{\frac{2}{\delta}} dt\\
&\leqslant& \int_{ \{t\in [a,b]\, ; \, \| u_{\omega}(t,\cdot)\|_{L^{\infty}_x} \leqslant 1\}}\left(e^{4\pi(1+\epsilon)\| u_{\omega}(t,\cdot)\|_{L^{\infty}_x}^2}-1\right)^{\frac{2}{\delta}} dt\\
&+& \int_{ \{t\in [a,b]\, ; \, \| u_{\omega}(t,\cdot)\|_{L^{\infty}_x} > 1\}} \left(e^{2\pi(1+\epsilon)\| u_{\omega}(t,\cdot)\|_{L^{\infty}_x}^2}\right)^{\frac{4}{\delta}} dt.
\end{eqnarray*}
Arguing as previously, one gets
$$
\|\nabla \left( u_{\omega}(e^{4\pi\arrowvert u_{\omega}\arrowvert^2}-1)\right)\|_{L^{\frac{4}{3}}((a,b), L^{\frac{4}{3}})} \leqslant C(l) \|\nabla u_{\omega}\|_{L^4((a,b),L^4)} \left(\|u_{\omega}\|_{_{L^4((a,b),C^{\frac{1}{2}})}}^{2}+\|u_{\omega}\|_{_{L^4((a,b),C^{\frac{1}{2}})}}^{4}\right)^{\frac{\delta}{4}}.
$$

\end{proof}

Using the same technique as in the proof of Proposition \ref{prop2} we establish the following estimates.
\begin{prop}
\label{prop1}
Under the same hypothesis of lemma \ref{main lem}, let $[a,b]$ be a sub-interval of $[0,l]$. Then
\begin{equation}
\|e^{4\pi(1+\epsilon)\arrowvert u_{\omega}\arrowvert^2}-1\|_{L^2((a,b),L^2)} \leqslant C(l) \left(\|u_{\omega}\|_{_{L^4((a,b),C^{\frac{1}{2}})}}^{2}+\|u_{\omega}\|_{_{L^4((a,b),C^{\frac{1}{2}})}}^{4}\right)^{\alpha};
\end{equation}
\begin{equation}
\|e^{4\pi\arrowvert u_{\omega}\arrowvert^2}-1\|_{L^2((a,b),L^2)} \leqslant C(l) \left(\|u_{\omega}\|_{_{L^4((a,b),C^{\frac{1}{2}})}}^{2}+\|u_{\omega}\|_{_{L^4((a,b),C^{\frac{1}{2}})}}^{4}\right)^{\beta};
\end{equation}
\begin{equation}
\|\arrowvert u_{\omega}\arrowvert^2 e^{4\pi\arrowvert u_{\omega}\arrowvert^2}\|_{L^2((a,b),L^2)} \leqslant C(l)  \left(\|u_{\omega}\|_{_{L^4((a,b),C^{\frac{1}{2}})}}^{2}+\|u_{\omega}\|_{_{L^4((a,b),C^{\frac{1}{2}})}}^{4}\right)^{\gamma};
\end{equation}
\begin{equation}
\|e^{4\pi(1+\epsilon)\arrowvert u_{\omega}\arrowvert^2}-1\|_{L^{\frac{4}{3}}((a,b),L^{4(1+\epsilon)})} \leqslant C(l)  \left(\|u_{\omega}\|_{_{L^4((a,b),C^{\frac{1}{2}})}}^{2}+\|u_{\omega}\|_{_{L^4((a,b),C^{\frac{1}{2}})}}^{4}\right)^{\delta}.
\end{equation}
Here $\epsilon>0$ satisfies a finite number of smallness conditions and $\alpha, \beta, \gamma$ and $\delta$ are positive constants depending on $A(l)$ and $\epsilon$.
\end{prop}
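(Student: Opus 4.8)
The plan is to run, for each of the four quantities, the very same two--regime argument used in the proof of Proposition~\ref{prop2}. In every case the integrand factorises as a product of a purely spatial piece, controlled by the Moser--Trudinger inequality \eqref{MT1} together with the conservation of mass, and a piece depending on $t$ only through $\|u_\omega(t,\cdot)\|_{L^\infty_x}$, controlled by splitting $[a,b]$ according to whether $\|u_\omega(t,\cdot)\|_{L^\infty_x}\le 1$ or $>1$ and invoking the logarithmic inequality \eqref{Log} on the second set. No new idea beyond Proposition~\ref{prop2} is needed; only the exponents change.

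Concretely, for the first estimate I would use the pointwise bound
\[
\bigl(e^{4\pi(1+\epsilon)|u_\omega|^2}-1\bigr)^{2}\le \bigl(e^{4\pi(1+\epsilon)\|u_\omega(t,\cdot)\|_{L^\infty_x}^2}-1\bigr)\,\bigl(e^{4\pi(1+\epsilon)|u_\omega|^2}-1\bigr),
\]
integrate in $x$, and bound the residual $\|e^{4\pi(1+\epsilon)|u_\omega|^2}-1\|_{L^1_x}$ by $C\|u_0\|_{L^2}^2$ via \eqref{MT1} applied to $u_\omega/A(l)$ (legitimate once $\epsilon$ is so small that $(1+\epsilon)A(l)^2<1$) and the conservation of mass. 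What is left is a time norm of $e^{4\pi(1+\epsilon)\|u_\omega(t,\cdot)\|_{L^\infty_x}^2}-1$, exactly the object already handled in Proposition~\ref{prop2}. The second estimate is the special case $\epsilon=0$. The third follows from the first, since $xe^{x}\le \epsilon^{-1}(e^{(1+\epsilon)x}-1)$ with $x=4\pi|u_\omega|^2$ yields $|u_\omega|^2 e^{4\pi|u_\omega|^2}\le (4\pi\epsilon)^{-1}\bigl(e^{4\pi(1+\epsilon)|u_\omega|^2}-1\bigr)$. For the fourth I would instead raise the integrand to the power $p=4(1+\epsilon)$ and extract $\bigl(e^{4\pi(1+\epsilon)\|u_\omega(t,\cdot)\|_{L^\infty_x}^2}-1\bigr)^{p-1}$, the remaining $L^1_x$ mass being again absorbed by \eqref{MT1}; what survives is once more a time norm of the $L^\infty_x$--factor.

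The one genuine computation, common to all four, is the estimate of $e^{c\pi(1+\epsilon)\|u_\omega(t,\cdot)\|_{L^\infty_x}^2}$ in time, where $c>0$ is a fixed constant (equal to $2$ or $4$ according to the estimate). On $\{\,\|u_\omega(t,\cdot)\|_{L^\infty_x}\le 1\,\}$ a first--order Taylor expansion together with $C^{1/2}\hookrightarrow L^\infty$ bounds the integrand by $C\,\|u_\omega(t,\cdot)\|_{C^{1/2}}^2$. On $\{\,\|u_\omega(t,\cdot)\|_{L^\infty_x}>1\,\}$ I would apply \eqref{Log} with $\beta=\tfrac12$, some $\lambda>\tfrac1\pi$ and $\mu\in(0,1]$ chosen exactly as in \eqref{previous} so that $\|u_\omega(t,\cdot)\|_\mu\le A'(l,\mu)<1$; the monotonicity of $x\mapsto x^2\log\bigl(C_\lambda+c\,\|u_\omega(t,\cdot)\|_{C^{1/2}}/x\bigr)$ then converts the exponential into $C(l)\bigl(1+\|u_\omega(t,\cdot)\|_{C^{1/2}}\bigr)^{c\pi\lambda A'(l,\mu)^2}$, a polynomial power of the $C^{1/2}$--norm that can be integrated in time against $\|u_\omega\|_{L^4((a,b),C^{1/2})}$. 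The embedding $W^{1,4}(\R^2)\hookrightarrow C^{1/2}(\R^2)$ then produces the announced right--hand sides, with $\alpha,\beta,\gamma,\delta$ reading off the resulting exponents.

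The main obstacle is the simultaneous calibration of $\epsilon,\lambda,\mu$. One needs at once $(1+\epsilon)A(l)^2<1$ for Moser--Trudinger, $\lambda>\tfrac1\pi$ for \eqref{Log}, and the polynomial power $c\pi(1+\epsilon)\lambda A'(l,\mu)^2$ no larger than the integrability exponent supplied by the $L^4_t(C^{1/2})$ norm, so that the time integration closes and the dependence is $\bigl(\|u_\omega\|^2+\|u_\omega\|^4\bigr)^{\,\cdot}$ as stated. All three can be met precisely because $A(l)<1$: taking $\mu$ small makes $A'(l,\mu)$ as close to $A(l)$ as desired, after which $\lambda$ close to $\tfrac1\pi$ and $\epsilon$ small keep the product strictly below the threshold. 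This is exactly where the subcriticality hypothesis \eqref{A2} enters, and it is the only delicate point of the argument.
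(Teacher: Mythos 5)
Your proposal is correct and coincides with the paper's (un-written) proof: the paper disposes of Proposition~\ref{prop1} with the single sentence that it follows by ``the same technique as in the proof of Proposition~\ref{prop2}'', and your argument is exactly that technique --- peeling off an $L^\infty_x$ factor, absorbing the residual $L^1_x$ mass by Moser--Trudinger and mass conservation, and handling the time norm of $e^{c\pi(1+\epsilon)\|u_\omega(t,\cdot)\|_{L^\infty_x}^2}$ by the same two-regime split with the log inequality. You also correctly isolate the only delicate point, the joint calibration of $\epsilon$, $\lambda$, $\mu$ made possible by $A(l)<1$, so nothing is missing.
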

\begin{rem}
The first and last estimates hold also true for $U$ under the hypothesis of Lemma \ref{main lem}.
\end{rem}
\subsection*{Proof of Lemma \ref{main lem}}
Define the function $f(u):= u(e^{4\pi\arrowvert u\arrowvert^2}-1).$ Divide the interval $[0,l]$ into a finite number of sub-intervals $[t_j,t_{j+1}],$ $j=0,...,J-1$, where $t_0=0$ and $t_J=l$. The integral forms for $u_{\omega}$ and $U$ read as follows
$$
u_{\omega}(t)=e^{i(t-t_j)\Delta}u_{\omega}(t_j)+i\int_{t_j}^t \theta (\omega s) e^{i(t-s)\Delta}f(u_{\omega}(s)) ds
$$
and
$$
U(t)=e^{i(t-t_j)\Delta}U(t_j)+i I(\theta)\int_{t_j}^t   e^{i(t-s)\Delta}f(U(s)) ds.
$$
Our aim is to estimate $\|u_{\omega}-U\|_{L^q((t_j,t_{j+1}), W^{1,r})}$.
Using the above integral forms, write
$$
u_{\omega}-U=i(I_1+I_2)+e^{i(t-t_j)\Delta}\left(u_{\omega}(t_j)-U(t_j)\right)
$$
where
\begin{eqnarray*}I_1&:=&\int_{t_j}^t \theta(\omega s) e^{i(t-s)\Delta}(f(u_{\omega}(s))-f(U(s)))ds\\
\end{eqnarray*}
and
\begin{eqnarray*}I_2&:=&\int_{t_j}^t(\theta (\omega s)-I(\theta))e^{i(t-s)\Delta}f(U(s))ds\\
\end{eqnarray*}
Using the Strichartz's estimate we get
$$
\|u_{\omega}-U\|_{L^q((t_j,t_{j+1}), L^{r})} \lesssim \|u_{\omega}(t_j)-U(t_j)\|_{L^2_x}+\|f(u_{\omega})-f(U)\|_{L^{\frac{4}{3}}((t_j,t_{j+1}),L^{\frac{4}{3}})}+\epsilon_{\omega,j}(q,r)
$$
where
\begin{eqnarray}
\label{c}
 \epsilon_{\omega,j}(q,r):=\Arrowvert I_2\Arrowvert_{L^q((t_j,t_{j+1}),L^r)}.
\end{eqnarray}
From Lemma \ref{a}, we infer
$$
\epsilon_{\omega,j}(q,r) \underset{\arrowvert \omega\arrowvert\rightarrow \infty}{\longrightarrow}0\quad\mbox{ for all}\quad j.
$$
To estimate the term $\|f(u_{\omega})-f(U)\|_{L^{\frac{4}{3}}((t_j,t_{j+1}),L^{\frac{4}{3}})}$, we use \eqref{Loc1}
for $\epsilon>0$ (to be chosen later suitably)
\begin{eqnarray*}
\|f(u_{\omega})-f(U)\|_{L^{\frac{4}{3}}((t_j,t_{j+1}),L^{\frac{4}{3}})} &\leqslant& C_{\varepsilon} \|u_{\omega}-U\|_{L^{4}((t_j,t_{j+1}),L^{4})} X_{\omega,j}\\
\end{eqnarray*}
where
$X_{\omega,j}:= \|e^{4\pi(1+\varepsilon)|u_{\omega}|^2}-1\|_{L^{2}((t_j,t_{j+1}),L^{2})}+\|e^{4\pi(1+\varepsilon)|U|^2}-1\|_{L^{2}((t_j,t_{j+1}),L^{2})}$.\\
At final we come to
$$
\|u_{\omega}-U\|_{L^q((t_j,t_{j+1}), L^{r})} \lesssim \|u_{\omega}(t_j)-U(t_j)\|_{L^2_x}+C_{\varepsilon} \|u_{\omega}-U\|_{L^{4}((t_j,t_{j+1}),L^{4})} X_{\omega,j}+\epsilon_{\omega,j}(q,r).
$$
We do the same for $\|\nabla \left(u_{\omega}-U\right)\|_{L^q((t_j,t_{j+1}), L^{r})}$.\\
A straightforward calculation give
$$
\nabla [f(u)]=(Df)(u) \cdot Du,
$$
where
$$
(Df)(u):=\left( \begin{array}{c}
e^{4\pi|u|^2}-1+ 4 \pi  |u|^2 e^{4\pi|u|^2}\\
4 \pi |u|^2 e^{4\pi|u|^2}\\
\end{array} \right)
$$
and
$$
Du:=\left( \begin{array}{c}
\nabla u\\
 \nabla \bar{u}\\
\end{array} \right).
$$
Using integral forms we get
$$
\nabla u_{\omega}-\nabla U=i(J_1+J_2+J_3)+e^{i(t-t_j)\Delta}\left( \nabla u_{\omega}(t_j)-\nabla U(t_j)\right)
$$
where
$$
J_1:=\int_{t_j}^t \theta(\omega s)\,  e^{i(t-s)\Delta} (Df)(u_{\omega}) \cdot (D u_{\omega}-D U)ds,
$$

$$
J_2:=\int_{t_j}^t \theta(\omega s)\, e^{i(t-s)\Delta}[(Df)(u_{\omega})- (Df)(U)] \cdot D U ds,
$$
and
$$
J_3:=\int_{t_j}^t[\theta (\omega s)-I(\theta)] \, e^{i(t-s)\Delta}  \nabla [f(U)] ds.
$$
Using Strichartz's estimate we get
\begin{eqnarray*}
\|\nabla \left(u_{\omega}-U\right)\|_{L^q((t_j,t_{j+1}), L^{r})} &\lesssim& \|\nabla \left(u_{\omega}(t_j)-U(t_j)\right)\|_{L^2_x}+\|(Df)(u_{\omega}) \cdot (D u_{\omega}-D U)\|_{L^{\frac{4}{3}}((t_j,t_{j+1}),L^{\frac{4}{3}})}\\
&+&\|[(Df)(u_{\omega})- (Df)(U)] \cdot D U\|_{L^1((t_j,t_{j+1}), L^{2})} +\tilde{\epsilon}_{\omega,j}(q,r)
\end{eqnarray*}
where
\begin{eqnarray}
\label{d}
 \tilde{\epsilon}_{\omega,j}(q,r):=\Arrowvert J_3\Arrowvert_{L^q((t_j,t_{j+1}),L^r)}.
\end{eqnarray}
From Lemma \ref{a}, we infer
$$
\tilde{\epsilon}_{\omega,j}(q,r) \underset{\arrowvert \omega\arrowvert\rightarrow \infty}{\longrightarrow}0\quad\mbox{ for all}\quad j.
$$
On one hand, we have
$$
\|(Df)(u_{\omega}) \cdot (D u_{\omega}-D U)\|_{L^{\frac{4}{3}}((t_j,t_{j+1}),L^{\frac{4}{3}})} \lesssim \|\nabla u_{\omega}- \nabla U \|_{L^4((t_j,t_{j+1}),L^4)} Y_{\omega,j}.
$$
Here $Y_{\omega,j}:= \|e^{4\pi|u_{\omega}|^2}-1\|_{L^{2}((t_j,t_{j+1}),L^{2})}+\||u_{\omega}|^2e^{4\pi|u_{\omega}|^2}\|_{L^{2}((t_j,t_{j+1}),L^{2})}$.\\
On the other hand, estimate \eqref{Loc2} yields
$$
\|[(Df)(u_{\omega})- (Df)(U)] \cdot D U\|_{L^1((t_j,t_{j+1}), L^{2})} \lesssim C_{\varepsilon} \|u_{\omega}-U\|_{L^{\infty}((t_j,t_{j+1}),H^{1})} Z_{\omega,j}.
$$
where
\begin{eqnarray*}
Z_{\omega,j}&:=&\left(\|u_{\omega}\|_{L^{4}((t_j,t_{j+1}),H^{1})}+\|U\|_{L^{4}((t_j,t_{j+1}),H^{1})}+\|e^{4\pi(1+\epsilon)\arrowvert u_{\omega}\arrowvert^2}-1\|_{L^{\frac{4}{3}}((t_j,t_{j+1}),L^{4(1+\epsilon)}_x)}\right.\\
& &+\left.\|e^{4\pi(1+\epsilon)\arrowvert U\arrowvert^2}-1\|_{L^{\frac{4}{3}}((t_j,t_{j+1}),L^{4(1+\epsilon)}_x)}\right) \|\nabla U\|_{L^{4}((t_j,t_{j+1}),L^{4})},
\end{eqnarray*}
and $\epsilon>0$ to be chosen suitably. Here we used the Sobolev injection $H^1(\mathbb{R}^2)\hookrightarrow L^8(\mathbb{R}^2)$ and the embedding $L^{4}((t_j,t_{j+1}))\hookrightarrow L^{\frac{4}{3}}((t_j,t_{j+1}))$. Moreover
\begin{eqnarray*}
\|\nabla \left(u_{\omega}-U\right)\|_{L^q((t_j,t_{j+1}), L^{r})} &\lesssim& \|\nabla \left(u_{\omega}(t_j)-U(t_j)\right)\|_{L^2_x}+\|\nabla u_{\omega}- \nabla U \|_{L^4((t_j,t_{j+1}),L^4)} Y_{\omega,j}\\
&+& C_{\varepsilon} \|u_{\omega}-U\|_{L^{\infty}((t_j,t_{j+1}),H^{1})} Z_{\omega,j} +\tilde{\epsilon}_{\omega,j}(q,r).
\end{eqnarray*}
Summing the inequalities we get
\begin{eqnarray*}
\|u_{\omega}-U\|_{L^q((t_j,t_{j+1}), W^{1,r})} &\lesssim&\|u_{\omega}(t_j)-U(t_j)\|_{H^1_x}+C_{\varepsilon} \|u_{\omega}-U\|_{L^{4}((t_j,t_{j+1}),W^{1,4})}Y_{\omega,j}\\
&+&\|u_{\omega}-U\|_{L^{\infty}((t_j,t_{j+1}),H^{1})} (X_{\omega,j}+Z_{\omega,j})+\left(\epsilon_{\omega,j}(q,r)+\tilde{\epsilon}_{\omega,j}(q,r)\right).
\end{eqnarray*}
Now we will use Proposition \ref{prop1} to estimate successively the quantities $X_{\omega,j},Y_{\omega,j}$ and $Z_{\omega,j}$.\\
Set
$$
X_{\omega}:= \|e^{4\pi(1+\varepsilon)|u_{\omega}|^2}-1\|_{L^{2}((0,l),L^{2})}+\|e^{4\pi(1+\varepsilon)|U|^2}-1\|_{L^{2}((0,l),L^{2})};
$$
$$
Y_{\omega}:= \|e^{4\pi|u_{\omega}|^2}-1\|_{L^{2}((0,l),L^{2})}+\||u_{\omega}|^2e^{4\pi|u_{\omega}|^2}\|_{L^{2}((0,l),L^{2})},
$$
\begin{eqnarray*}
Z_{\omega}&:=&\left(\|u_{\omega}\|_{L^{4}((0,l),H^{1})}+\|U\|_{L^{4}((0,l),H^{1})}+\|e^{4\pi(1+\epsilon)\arrowvert u_{\omega}\arrowvert^2}-1\|_{L^{\frac{4}{3}}((0,l),L^{4(1+\epsilon)}_x)}\right.\\
& &+\left.\|e^{4\pi(1+\epsilon)\arrowvert U\arrowvert^2}-1\|_{L^{\frac{4}{3}}((0,l),L^{4(1+\epsilon)}_x)}\right) \|\nabla U\|_{L^{4}((0,l),L^{4})};
\end{eqnarray*}
$$
\epsilon_{\omega}(q,r):=\Arrowvert I_2\Arrowvert_{L^q((0,l),L^r)} \quad \mbox{and}  \quad \tilde{\epsilon}_{\omega}(q,r):=\Arrowvert J_3\Arrowvert_{L^q((0,l),L^r)}.
$$
We have
$$
X_{\omega} \leqslant C(l) \bigg \lbrace \left(\|u_{\omega}\|_{_{L^4((0,l),C^{\frac{1}{2}})}}^{2}+\|u_{\omega}\|_{_{L^4((0,l),C^{\frac{1}{2}})}}^{4}\right)^{\alpha}+\left(\|U\|_{_{L^4((0,l),C^{\frac{1}{2}})}}^{2}+\|U\|_{_{L^4((0,l),C^{\frac{1}{2}})}}^{4}\right)^{\tilde{\alpha}} \bigg \rbrace,
$$
$$
Y_{\omega} \leqslant C(l) \bigg \lbrace \left(\|u_{\omega}\|_{_{L^4((0,l),C^{\frac{1}{2}})}}^{2}+\|u_{\omega}\|_{_{L^4((0,l),C^{\frac{1}{2}})}}^{4}\right)^{\beta}+\left(\|u_{\omega}\|_{_{L^4((0,l),C^{\frac{1}{2}})}}^{2}+\|u_{\omega}\|_{_{L^4((0,l),C^{\frac{1}{2}})}}^{4}\right)^{\gamma}\bigg \rbrace,
$$
and
\begin{eqnarray*}
Z_{\omega}&\leqslant& C(l) \bigg \lbrace\|u_{\omega}\|_{L^{4}((0,l),H^{1})}+\|U\|_{L^{4}((0,l),H^{1})}+ \left(\|u_{\omega}\|_{_{L^4((0,l),C^{\frac{1}{2}})}}^{2}+\|u_{\omega}\|_{_{L^4((0,l),C^{\frac{1}{2}})}}^{4}\right)^{\delta}\\
& &+\left( \|U\|_{_{L^4((0,l),C^{\frac{1}{2}})}}^{2}+\|U\|_{_{L^4((0,l),C^{\frac{1}{2}})}}^{4}\right)^{\tilde{\delta}}  \bigg \rbrace \|\nabla U\|_{L^{4}((0,l),L^{4})},
\end{eqnarray*}
where $\epsilon>0$ was chosen according to Proposition \ref{prop1}.

The hypothesis on $u_{\omega}$ and $U$ allow us to apply Lemma \ref{dec} and to divide the interval $[0,l]$ into a finite number of sub-intervals $[t_j,t_{j+1}],$ $j=0,...,J-1$, where $t_0=0$, $t_J=l$ and $J$ is a positive integer less than a constant independent of $\omega$ and such that for $|\omega|$ sufficiently large and all $j$
$$
X_{\omega,j}+Z_{\omega,j} \leqslant \frac{1}{2} \quad \mbox{and}  \quad Y_{\omega,j} \leqslant \frac{1}{6}.
$$
Let us give some details here. We will only consider the $Y_{\omega,j}$-estimate, the other one could be carried out similarly. \\
Let $\epsilon>0$ be such that $C(l)\lbrace \left(\epsilon+\epsilon^{\frac{1}{2}}\right)^{\beta}+\left(\epsilon+\epsilon^{\frac{1}{2}}\right)^{\gamma}\bigg \rbrace\leqslant \frac{1}{6}$.\\
Since $\underset{|\omega| \to \infty}{\mbox{lim sup}}\|u_{\omega}\|_{L^4(0,l), C^{\frac{1}{2}}(\R^2))}<\infty$, there exists $\xi_0$, such that for all $\xi\geq\xi_0$ and all $|\omega|\geq\xi$
\[\|u_{\omega}\|_{L^4(0,l), C^{\frac{1}{2}}(\R^2))} \leq \underset{|\omega| \to \infty}{\mbox{lim sup}}\|u_{\omega}\|_{L^4(0,l), C^{\frac{1}{2}}(\R^2))}+1.\]
Fix $\omega$ such that $|\omega|\geq\xi_0$ and set $h(t):=\|u_{\omega}(t,\cdot)\|_{ C^{\frac{1}{2}}(\R^2)}^4$ and $M:=\left(\underset{|\omega| \to \infty}{\mbox{lim sup}}\|u_{\omega}\|_{L^4(0,l), C^{\frac{1}{2}}(\R^2))}+1\right)^4$. The previous claim can be rewritten as follows
\[\int_0^l h(t) dt \leq M. \]
From Lemma \ref{dec}, there exists a finite partition of the interval $[0,l]$ into a family of sub-intervals $\{[t_j,t_{j+1}]\}_{j=0}^{J-1}$, where $t_0=0$, $t_J=l$, $J$ a positive integer less than $[\frac{M}{\epsilon}]+1$ and such that, for all $j\in \{0,\cdots,J-1\}$
\[\int_{t_j}^{t_{j+1}} h(t) dt \leq \epsilon. \]
We infer that, for all $j\in \{0,\cdots,J-1\}$
\begin{align*}
Y_{\omega,j} &\leqslant C(l) \bigg \lbrace \left(\|u_{\omega}\|_{_{L^4((t_j,t_{j+1}),C^{\frac{1}{2}})}}^{2}+\|u_{\omega}\|_{_{L^4((t_j,t_{j+1}),C^{\frac{1}{2}})}}^{4}\right)^{\beta}+\left(\|u_{\omega}\|_{_{L^4((t_j,t_{j+1}),C^{\frac{1}{2}})}}^{2}+\|u_{\omega}\|_{_{L^4((t_j,t_{j+1}),C^{\frac{1}{2}})}}^{4}\right)^{\gamma}\bigg \rbrace\\
&\leqslant C(l)\bigg \lbrace \left(\epsilon+\epsilon^{\frac{1}{2}}\right)^{\beta}+\left(\epsilon+\epsilon^{\frac{1}{2}}\right)^{\gamma}\bigg \rbrace\leqslant \frac{1}{6}.
\end{align*}
This achieves the proof of the claimed estimate on $Y_{\omega,j}$.\\
We note that, a priori, the integer $J$ as well as the real numbers $t_j$ may depend on $\omega$.\\
In the sequel we will denote $\epsilon_{\omega}(q,r)+\tilde{\epsilon}_{\omega}(q,r)$ by $\alpha_{\omega}(q,r)$.
We have, for all $j$
\begin{eqnarray*}
\|u_{\omega}-U\|_{L^q((t_j,t_{j+1}), W^{1,r})} &\lesssim&\|u_{\omega}(t_j)-U(t_j)\|_{H^1_x}+  \frac{1}{6} \|u_{\omega}-U\|_{L^{4}((t_j,t_{j+1}),W^{1,4})}\\
&+& \frac{1}{2} \|u_{\omega}-U\|_{L^{\infty}((t_j,t_{j+1}),H^{1})} +\alpha_{\omega}(q,r).
\end{eqnarray*}
We argue as follows. Letting $j=0$, yields
\begin{eqnarray*}
\|u_{\omega}-U\|_{L^q((t_0,t_{1}), W^{1,r})} &\lesssim&  \frac{1}{6} \|u_{\omega}-U\|_{L^{4}((t_0,t_{1}),W^{1,4})}+\frac{1}{2} \|u_{\omega}-U\|_{L^{\infty}((t_0,t_{1}),H^{1})} +\\
& & +\alpha_{\omega}(q,r).
\end{eqnarray*}
Letting $(q,r)=(\infty,2)$, we see that
\begin{eqnarray*}
\|u_{\omega}-U\|_{L^{\infty}((t_0,t_{1}),H^{1})} &\lesssim& 2 \bigg \lbrace \frac{1}{6} \|u_{\omega}-U\|_{L^{4}((t_0,t_{1}),W^{1,4})}+\alpha_{\omega}(\infty,2) \bigg \rbrace.
\end{eqnarray*}
Thus
\begin{eqnarray*}
\|u_{\omega}-U\|_{L^q((t_0,t_{1}), W^{1,r})} &\lesssim&  \frac{1}{3} \|u_{\omega}-U\|_{L^{4}((t_0,t_{1}),W^{1,4})}+\alpha_{\omega}(\infty,2)+\alpha_{\omega}(q,r).
\end{eqnarray*}
Letting $(q,r)=(4,4)$, we get
\begin{eqnarray*}
\|u_{\omega}-U\|_{L^4((t_0,t_{1}), W^{1,4})} &\lesssim&  \frac{3}{2}\left(\alpha_{\omega}(\infty,2)+\alpha_{\omega}(4,4)\right),
\end{eqnarray*}
and therefore,
\begin{eqnarray*}
\|u_{\omega}-U\|_{L^q((t_0,t_{1}), W^{1,r})} &\lesssim&  \frac{3}{2}\alpha_{\omega}(\infty,2) +\frac{1}{2}\alpha_{\omega}(4,4)+\alpha_{\omega}(q,r).
\end{eqnarray*}

An induction argument allows us to prove that, for all $j$ and all admissible pairs $(q,r)$
\begin{eqnarray}
 \label{Induction}
\|u_{\omega}-U\|_{L^q((t_j,t_{j+1}), W^{1,r})} &\lesssim&  a_j\alpha_{\omega}(\infty,2) +b_j\alpha_{\omega}(4,4)+\alpha_{\omega}(q,r).
\end{eqnarray}
where $a_j$ and $b_j$ are defined as follows
$$
a_j:=\frac{3^{j+1}}{2}+\frac{3^{j+2}}{4}-\frac{9}{4}, \quad j\in \lbrace {0,\cdots,J-1} \rbrace,
$$
and
$$
b_j:=\frac{3^{j}}{2}+\frac{3^{j}-1}{4}, \quad j\in \lbrace {0,\cdots,J-1} \rbrace.
$$
Indeed, if $J=1$, then the only value that could be taken by $j$ is $0$. This case was already settled above. Now, assume that $J\geq 2$ and let us prove the claimed estimate via an induction argument.\\
For $j=0$, there is nothing to prove. Assume that estimate \eqref{Induction} is true up to some $j<J-1$ and let us prove its validity for $j+1$. We have
\begin{eqnarray*}
\|u_{\omega}-U\|_{L^q((t_{j+1},t_{j+2}), W^{1,r})} &\lesssim&\|u_{\omega}(t_{j+1})-U(t_{j+1})\|_{H^1_x}+  \frac{1}{6} \|u_{\omega}-U\|_{L^{4}((t_{j+1},t_{j+2}),W^{1,4})}\\
&+& \frac{1}{2} \|u_{\omega}-U\|_{L^{\infty}((t_{j+1},t_{j+2}),H^{1})} +\alpha_{\omega}(q,r).
\end{eqnarray*}
Estimate \eqref{Induction} gives for $(q,r)=(\infty,2)$
\begin{eqnarray*}
\|u_{\omega}(t_{j+1})-U(t_{j+1})\|_{H^1_x} &\lesssim&  (a_j+1)\alpha_{\omega}(\infty,2) +b_j\alpha_{\omega}(4,4).
\end{eqnarray*}
Therefore
\begin{eqnarray*}
\|u_{\omega}-U\|_{L^q((t_{j+1},t_{j+2}), W^{1,r})} &\lesssim&(a_j+1)\alpha_{\omega}(\infty,2) +b_j\alpha_{\omega}(4,4)+  \frac{1}{6} \|u_{\omega}-U\|_{L^{4}((t_{j+1},t_{j+2}),W^{1,4})}\\
&+& \frac{1}{2} \|u_{\omega}-U\|_{L^{\infty}((t_{j+1},t_{j+2}),H^{1})} +\alpha_{\omega}(q,r).
\end{eqnarray*}
Letting $(q,r)=(\infty,2)$ in the latter estimate yields
\begin{eqnarray*}
\frac{1}{2} \|u_{\omega}-U\|_{L^{\infty}((t_{j+1},t_{j+2}),H^{1})} &\lesssim&(a_j+2)\alpha_{\omega}(\infty,2) +b_j\alpha_{\omega}(4,4)+  \frac{1}{6} \|u_{\omega}-U\|_{L^{4}((t_{j+1},t_{j+2}),W^{1,4})}.
\end{eqnarray*}
Hence
\begin{eqnarray*}
\|u_{\omega}-U\|_{L^q((t_{j+1},t_{j+2}), W^{1,r})} &\lesssim&(2 a_j+3)\alpha_{\omega}(\infty,2) + 2 b_j\alpha_{\omega}(4,4)+  \frac{1}{3} \|u_{\omega}-U\|_{L^{4}((t_{j+1},t_{j+2}),W^{1,4})}\\
&+& \alpha_{\omega}(q,r).
\end{eqnarray*}
Now let $(q,r)=(4,4)$ in the above inequality. One gets
\begin{eqnarray*}
\frac{1}{3} \|u_{\omega}-U\|_{L^{4}((t_{j+1},t_{j+2}),W^{1,4})}&\lesssim& \frac{1}{2 }\lbrace(2 a_j+3)\alpha_{\omega}(\infty,2) + (2 b_j+1)\alpha_{\omega}(4,4)\rbrace,
\end{eqnarray*}
so that
\begin{eqnarray*}
\|u_{\omega}-U\|_{L^q((t_{j+1},t_{j+2}), W^{1,r})} &\lesssim&(3 a_j+\frac{9}{2})\alpha_{\omega}(\infty,2) + (3 b_j+\frac{1}{2})\alpha_{\omega}(4,4)+ \alpha_{\omega}(q,r).
\end{eqnarray*}
We conclude noting that $a_{j+1}=3 a_j+\frac{9}{2}$ and $b_{j+1}=3 b_j+\frac{1}{2}$.

Since $J$ is less than a constant independent of $\omega$, we can bound $a_j$ and $b_j$ from above by a constant independent of $\omega$. Thus, for all $j$
\begin{eqnarray*}
\|u_{\omega}-U\|_{L^q((t_j,t_{j+1}), W^{1,r})} &\lesssim& \alpha_{\omega}(\infty,2) +\alpha_{\omega}(4,4)+\alpha_{\omega}(q,r).
\end{eqnarray*}
The fact that
$$
\lbrace\alpha_{\omega}(\infty,2) +\alpha_{\omega}(4,4)+\alpha_{\omega}(q,r) \rbrace \underset{\arrowvert \omega\arrowvert\rightarrow \infty}{\longrightarrow}0,
$$
implies (after summing over $j$ and bounding again $J$ independently of $\omega$ )
$$
\|u_{\omega}-U\|_{L^q(0,l), W^{1,r})}\underset{\arrowvert \omega\arrowvert\rightarrow \infty}{\longrightarrow}0.
$$
This achieves the proof of Lemma \ref{main lem}.

\section{Proof of the Main Result}

Now we are in position to prove Theorem \ref{EL}.
Fix a time $0<T<\infty$. Set
$N:=\|\theta\|_{L^{\infty}(\mathbb{R})}$. We can divide the interval $[0,T]$ into a finite number of sub-intervals $[t_{j},t_{j+1}]$, $j\in \{0\cdots J-1\}$ for some $J\geqslant1$ such that, for all $j$
$$
\|U\|_{L^4([t_{j},t_{j+1}], W^{1,4}(\mathbb{R}^2))}\leqslant \epsilon.
$$
Here $0<\epsilon<1$ is to be chosen and depending on $A(T)$, $T$ , $N$ and some constants from the Strichartz's estimates and H\"older inequality.\\
Using the integral form of $U$ on each time interval $[t_{j},t_{j+1}]$, the Strichartz's estimate and Proposition \ref{prop1} for $U$, we get
\begin{eqnarray*}
\|e^{i(\cdot-t_{j})\Delta}U(t_{j})\|_{L^4([t_{j},t_{j+1}], W^{1,4}(\mathbb{R}^2))}&\leqslant& \|U\|_{L^4([t_{j},t_{j+1}], W^{1,4}(\mathbb{R}^2))}\\
\end{eqnarray*}
$$
+C(T) N \| U\|_{L^4([t_{j},t_{j+1}], W^{1,4})} \bigg\lbrace\left(\| U\|_{L^4([t_{j},t_{j+1}], W^{1,4})}^{2}+\| U\|_{L^4([t_{j},t_{j+1}], W^{1,4})}^{4}\right)^{\mu}
$$
$$
+ \left(\| U\|_{L^4([t_{j},t_{j+1}], W^{1,4})}^{2}+\| U\|_{L^4([t_{j},t_{j+1}], W^{1,4})}^{4}\right)^{\nu}\bigg \rbrace,
$$
where $\mu,\nu>0$ depend on $H(u_0)$. We see that for $\epsilon>0$ small enough
$$
\|e^{i(\cdot-t_{j})\Delta}U(t_{j})\|_{L^4([t_{j},t_{j+1}], W^{1,4}(\mathbb{R}^2))} \leqslant 2 \epsilon.
$$
For $t \in [t_{0},t_{1}]$, we get using Strichartz's estimate
\begin{eqnarray}
\label{bs}
\|u_{\omega}\|_{L^4([t_{0},t], W^{1,4}(\mathbb{R}^2))}&\leqslant& \|e^{i \tau \Delta}u_{0}\|_{L^4([t_{0},t_{1}], W^{1,4})}\\
\nonumber
&+& C(T) N \| u_{\omega}\|_{L^4([t_{0},t], W^{1,4})} \bigg \lbrace\left(\| u_{\omega}\|_{L^4([t_{0},t], W^{1,4})}^{2}+\| u_{\omega}\|_{L^4([t_{0},t], W^{1,4})}^{4}\right)^{\alpha}\\
\nonumber
&+& \left(\| u_{\omega}\|_{L^4([t_{0},t], W^{1,4})}^{2}+\| u_{\omega}\|_{L^4([t_{0},t], W^{1,4})}^{4}\right)^{\beta}\bigg \rbrace.
\end{eqnarray}
Here $\alpha$ and $\beta$ depend on $A(T)$.
The continuity argument (see Appendix) allows us to conclude that, for all $t \in [t_{0},t_{1}]$
$$
\|u_{\omega}\|_{L^4([t_{0},t], W^{1,4}(\mathbb{R}^2))}\leqslant C(T,N, \alpha, \beta).
$$
Indeed, set $X(t):=\|u_{\omega}\|_{L^4([t_{0},t], W^{1,4}(\mathbb{R}^2))}$, $t \in [t_{0},t_{1}]$. One can check, using Lebesgue dominated convergence theorem, that the nonnegative function $X$ is continuous on $[t_{0},t_{1}]$ and satisfies
\[ X(t) \leqslant 2 \epsilon +C(T) N X(t) \lbrace (X(t)^2+X(t)^4)^{\alpha}+ (X(t)^2+X(t)^4)^{\beta}\rbrace . \]
We assume without loss of generality that $ \alpha \leq \beta$.
The function $x \mapsto 2 \epsilon +C(T) N x \lbrace (x^2+x^4)^{\alpha}+ (x^2+x^4)^{\beta}\rbrace $ has the same behavior as $x \mapsto 2 \epsilon +C(T, \alpha, \beta) x^{1+2\alpha}$ in a neighborhood of $0$ and as $x \mapsto C(T, \alpha, \beta) x^{1+4 \beta}$ in a neighborhood of $+\infty$
. Therefore, one could carry out the same proof as in Lemma \ref{cont} to infer that, for a suitable choice of $\epsilon$, we have
\[X(t) \leqslant C(T,N,\alpha, \beta),\]
for all $t \in [t_{0},t_{1}]$. Here $C(T,N,\alpha, \beta)$ is some constant depending on $T, N, \alpha$ and $\beta$. The Sobolev injection $W^{1,4}(\mathbb{R}^2)\hookrightarrow C^{\frac{1}{2}}(\mathbb{R}^2)$ gives
$$
\underset{|\omega| \to \infty}{\mbox{lim sup}} \|u_{\omega}\|_{L^4([t_{0},t_{1}], C^{\frac{1}{2}}(\mathbb{R}^2))}<\infty.
$$
 Hence, from the local theory, $u_{\omega}$ exists on $[t_{0},t_{1}]$ for $|\omega|$ sufficiently large. Lemma \ref{main lem} allows us to conclude in particular that
$$
\|u_{\omega}(t_1)-U(t_1)\|_{H^1} \underset{|\omega| \to \infty}{\rightarrow}0.
$$
On $[t_{1},t_{2}]$, we get arguing as above
\begin{eqnarray*}
\|u_{\omega}\|_{L^4([t_{1},t], W^{1,4}(\mathbb{R}^2))}&\leqslant&\|u_{\omega}(t_1)-U(t_1)\|_{H^1}+ \|e^{i(\cdot-t_{1})\Delta}U(t_{1})\|_{L^4([t_{1},t_{2}], W^{1,4})}\\
&+& C(T) N \| u_{\omega}\|_{L^4([t_{1},t], W^{1,4})} \bigg \lbrace\left(\| u_{\omega}\|_{L^4([t_{1},t], W^{1,4})}^{2}+\| u_{\omega}\|_{L^4([t_{1},t], W^{1,4})}^{4}\right)^{\alpha}\\
&+&\left(\| u_{\omega}\|_{L^4([t_{1},t], W^{1,4})}^{2}+\| u_{\omega}\|_{L^4([t_{1},t], W^{1,4})}^{4}\right)^{\beta}\bigg \rbrace.
\end{eqnarray*}
Again the continuity argument insures that
$$
\underset{|\omega| \to \infty}{\mbox{lim sup}}\|u_{\omega}\|_{L^4([t_{0},t_{2}], C^{\frac{1}{2}}(\mathbb{R}^2))}<\infty.
$$
Therefore, $u_{\omega}$ exists on $[t_{0},t_{2}]$ for $|\omega|$ sufficiently large and Lemma \ref{main lem} gives
$$
\|u_{\omega}(t_2)-U(t_2)\|_{H^1} \underset{|\omega| \to \infty}{\rightarrow}0.
$$
An induction argument achieves the proof of Theorem \ref{EL}.

\section{Appendix}
\begin{lem}
\label{dec}
Let $M, \,\ell>0$. Suppose that $f: [0,\ell] \rightarrow \R^{+}$ is an integrable and positive function satisfying
$$
\int_0^\ell f(t) \, dt \leqslant M.
$$
Then, for all $\epsilon>0$, there exists a finite partition of $[0,\ell]$ into a family of sub-intervals $\{[t_j,t_{j+1}]\}_{j=0}^{J-1}$, where $t_0=0$, $t_J=l$ and $J$ is a positive integer less than $[\frac{M}{\epsilon}]+1$ such that, for all $j\in \{0, \cdots, J-1\}$
$$
\int_{t_j}^{t_{j+1}}\, f(t) \, dt \leqslant \epsilon.
$$
Here $[x]$ denotes the integer part of the real number $x$.
\end{lem}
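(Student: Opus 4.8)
The plan is to reduce the statement to an elementary property of the cumulative integral of $f$, since the quantity to be controlled on each piece of the partition is exactly an increment of that integral. First I would set $F(t):=\int_0^t f(s)\,ds$ for $t\in[0,\ell]$. Because $f\in L^1([0,\ell])$ and $f\ge 0$, the function $F$ is absolutely continuous — in particular continuous — and nondecreasing, with $F(0)=0$ and $F(\ell)=\int_0^\ell f\le M$. The whole point is that for any subinterval $[a,b]\subset[0,\ell]$ one has $\int_a^b f=F(b)-F(a)$, so bounding the mass of $f$ on a piece is the same as bounding an increment of the continuous nondecreasing function $F$.

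Next I would build the partition greedily by following the level crossings of $F$. Set $t_0=0$; given $t_j<\ell$, if $F(\ell)-F(t_j)\le\epsilon$ I stop and put $t_{j+1}=\ell$, while otherwise I set
\[ t_{j+1}:=\min\{t\in[t_j,\ell]\ :\ F(t)-F(t_j)=\epsilon\}. \]
On the second branch the minimum exists: the continuous map $t\mapsto F(t)-F(t_j)$ runs from $0$ at $t_j$ to a value $>\epsilon$ at $\ell$, so by the intermediate value theorem it attains $\epsilon$, and the set where it does so is closed. By construction $t_{j+1}>t_j$ (the increment vanishes at $t_j$) and $F(t_{j+1})=F(t_j)+\epsilon<F(\ell)$, whence $t_{j+1}<\ell$; thus every interval produced on this branch is nondegenerate and carries exactly mass $\epsilon$, while the terminal interval $[t_{J-1},\ell]$ carries mass $F(\ell)-F(t_{J-1})\le\epsilon$. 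In every case $\int_{t_j}^{t_{j+1}} f=F(t_{j+1})-F(t_j)\le\epsilon$, which is the required estimate.

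Finally I would settle termination and the bound on $J$ in a single counting step. Each of the first $J-1$ iterations raises $F$ by exactly $\epsilon$, so telescoping gives $(J-1)\epsilon=F(t_{J-1})-F(t_0)\le F(\ell)\le M$; hence $J-1\le M/\epsilon$, and since $J-1$ is a nonnegative integer this forces $J-1\le[M/\epsilon]$, i.e.\ $J\le[M/\epsilon]+1$. The same inequality caps the number of full steps, so the recursion halts after finitely many iterations and delivers the desired finite partition. I do not expect a genuine obstacle here: the argument is essentially bookkeeping, and the only delicate points are the continuity of $F$ (needed both to invoke the intermediate value theorem and to guarantee the increments are attained exactly rather than overshot) and the trivial edge case $J=1$ (when $\int_0^\ell f\le\epsilon$ already), both dispatched by the strict inequalities above. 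I would also remark that the bound is sharp as an ``at most'' statement — splitting a total mass just above $2\epsilon$, say $M=2.5\,\epsilon$, genuinely requires three pieces — so the conclusion is $J\le[M/\epsilon]+1$, which is how the lemma is used in the proof of Theorem~\ref{EL}.
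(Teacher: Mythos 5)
Your proof is correct and follows essentially the same route as the paper: both arguments work with the continuous, increasing primitive $F(t)=\int_0^t f(s)\,ds$ and use the intermediate value theorem to cut $[0,\ell]$ at the points where the accumulated mass reaches successive multiples of $\epsilon$, then count those cuts against the bound $F(\ell)\le M$ to get $J\le[\frac{M}{\epsilon}]+1$. Your greedy recursion produces the same partition points as the paper's direct choice of the level crossings $\phi(x_j)=j\epsilon$, so there is nothing substantive to add.
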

\begin{proof}
Set $\phi(x):=\displaystyle\int_0^x\, f(t) \, dt$, $0\leqslant x \leqslant \ell$. It is clear that $\phi$ is continuous and increasing. We distinguish two cases.\\
$(i)$ $M\leqslant \epsilon$:\\
In this case it suffices to take $J=1$, $t_0=0$ and $t_J=l$.\\
$(ii)$ $M> \epsilon$:\\
Set $N:=[\frac{M}{\epsilon}]$ the integer part of $\frac{M}{\epsilon}$.
\begin{itemize}
\item If $\phi(\ell) < N\epsilon$. Set $n:=[\frac{\phi(\ell)}{\epsilon}] \in \{0, \cdots, N-1\}$. We have

$$
\phi(\ell) \in [n \epsilon, (n+1) \epsilon[.
$$
The mean value theorem insures the following:\\
For all $j\in \{0, \cdots, n\}$, there exists $x_j \in [0,\ell]$ $(x_0=0)$ such that
$$
\phi(x_j)=j \epsilon
$$
It suffices now to take $t_0=0$, $t_1=x_1$, $\cdots$, $t_{J-1}=x_n$ and $t_J=\ell$.\\
We see that, in this case, $J=n+1 \leqslant N \leqslant [\frac{M}{\epsilon}]+1$.
\end{itemize}
\begin{itemize}
\item if $N\epsilon \leqslant \phi(\ell)$, we argue similarly.
\end{itemize}
\end{proof}

\begin{lem}[Continuity argument]
\label{cont}
Let $X : [0,T]\to\R$ be a nonnegative continuous, such that, for
every $0\leqslant t \leqslant T $,
$$
X(t) \leqslant a+ b X(t)^{\theta} \, ,
$$
where $a,b>0$ and $\theta>1$ are constants such that
$$
a<\left(1-\frac{1}{\theta}\right) \frac{1}{(\theta b)^{1/(\theta-1)}} \quad \mbox{and} \quad X(0)\leqslant \frac{1}{(\theta b)^{1/(\theta-1)}}.
$$
Then, for every $0\leqslant t \leqslant T $, we have
$$
X(t) \leqslant \frac{\theta}{\theta-1}a.
$$
\end{lem}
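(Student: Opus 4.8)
The plan is to reduce the functional inequality to the study of the scalar auxiliary function
\[
g(x):=a+bx^{\theta}-x,\qquad x\geq 0,
\]
and then to run a connectedness argument on $[0,T]$. First I would compute $g'(x)=\theta b x^{\theta-1}-1$, whose unique zero on $(0,\infty)$ is $x_*:=\frac{1}{(\theta b)^{1/(\theta-1)}}$; since $g'<0$ on $(0,x_*)$ and $g'>0$ on $(x_*,\infty)$, the point $x_*$ is the global minimum of $g$ on $[0,\infty)$. Using $x_*^{\theta-1}=\frac{1}{\theta b}$ I would simplify $bx_*^{\theta}=\frac{x_*}{\theta}$, whence $g(x_*)=a-\left(1-\frac{1}{\theta}\right)x_*$. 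The first hypothesis on $a$ is precisely the statement $g(x_*)<0$. Combined with $g(0)=a>0$ and $g(x)\to+\infty$ as $x\to+\infty$, this shows that $g$ has exactly two positive roots $x_1<x_*<x_2$, with $g>0$ on $[0,x_1)$, $g<0$ on $(x_1,x_2)$, and $g>0$ on $(x_2,\infty)$.

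Next I would reinterpret the assumption $X(t)\leq a+bX(t)^{\theta}$ as $g(X(t))\geq 0$ for every $t\in[0,T]$, so that $X(t)\in[0,x_1]\cup[x_2,\infty)$ at each time. Since $X(0)\leq x_*<x_2$ and $g(X(0))\geq 0$, the value $X(0)$ cannot lie in the forbidden gap $(x_1,x_2)$, hence $X(0)\in[0,x_1]$. Now comes the continuity (bootstrap) step: the sets $A:=\{t\in[0,T]:X(t)\leq x_1\}$ and $B:=\{t\in[0,T]:X(t)\geq x_2\}$ are closed by continuity of $X$, they cover $[0,T]$ because $X$ never takes a value in $(x_1,x_2)$, and they are disjoint because $x_1<x_2$. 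As $[0,T]$ is connected and $0\in A$, we must have $A=[0,T]$, that is, $X(t)\leq x_1$ for all $t\in[0,T]$.

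Finally I would bound the small root $x_1$. Because $x_1\leq x_*$ and $x\mapsto bx^{\theta-1}$ is increasing, $bx_1^{\theta-1}\leq bx_*^{\theta-1}=\frac{1}{\theta}$, so $bx_1^{\theta}\leq\frac{x_1}{\theta}$; plugging this into the identity $x_1=a+bx_1^{\theta}$ (valid since $g(x_1)=0$) yields $x_1\leq a+\frac{x_1}{\theta}$, i.e. $x_1\leq\frac{\theta}{\theta-1}a$. Combined with the previous step this gives $X(t)\leq x_1\leq\frac{\theta}{\theta-1}a$ for all $t$, as claimed. The only genuinely delicate point is the connectedness step: one must ensure that continuity of $X$ forbids any jump across the gap $(x_1,x_2)$, which is exactly why the hypothesis $X(0)\leq x_*$, pinning the trajectory on the lower branch at the initial time, cannot be dropped.
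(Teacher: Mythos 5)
Your proof is correct and follows essentially the same route as the paper's, which introduces the same auxiliary function $f(x)=bx^{\theta}-x+a$, notes its monotonicity on either side of $x_*=(\theta b)^{1/(1-\theta)}$ and that $f(x_*)<0$, and then invokes continuity; the paper only sketches this, whereas you supply the full details (the two roots $x_1<x_*<x_2$, the connectedness argument, and the explicit bound $x_1\leq\frac{\theta}{\theta-1}a$). Nothing to object to.
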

\begin{proof}
We sketch the proof for the convenience of the reader.\newline The function $f: x \longmapsto b x^{\theta}-x+a$ is decreasing on $[0,(\theta b)^{1/(1-\theta)} ]$ and increasing on $[(\theta b)^{1/(1-\theta)},\infty[$. The assumptions on $a$ and $X(0)$ imply that $f((\theta b)^{1/(1-\theta)})<0$. As $f(X(t))\geqslant 0, f(0) >0$ and $X(0)\leqslant \frac{1}{(\theta b)^{1/(\theta-1)}}$, we deduce the desired result.
\end{proof}


\begin{thebibliography}{10}
\bibitem{AT}
Adachi, S., Tanaka, K.: {\em Trudinger type inequalities in $\mathbb R^N$ and their best exponents}, Proc.~Amer.~Math.~Soc. {\bf 128} (2000), no.~7, 2051--2057.

\bibitem{BIP} {Bahouri, H., Ibrahim, S., Perelman, G.:} {\em Scattering for the critical 2-{D} {\tt NLS} with exponential
              growth}, {Differential Integral Equations}, {\bf 27} ({2014}), {233--268}.

\bibitem{JFA} {Bahouri, H., Majdoub, M., Masmoudi, N.:} {\em On the lack of compactness in the 2{D} critical {S}obolev
              embedding}, {J. Funct. Anal.}, {\bf 260} ({2011}), {208--252}.


\bibitem{BL} Bergh, J., L{\"o}fstr{\"o}m, J.: \emph{Interpolation spaces. An
introduction.} Grundlehren der Mathematischen Wissenschaften, No.
223. Springer-Verlag, Berlin-New York, 1976.

\bibitem{Bourgain1}{Bourgain, J.:} {\em Global wellposedness
of defocusing critical nonlinear Schr\"odinger equation in the radial case},
J. Amer. Math. Soc. {\bf 12} (1999), no.~1, 145--171.

\bibitem{Cazenave}{Cazenave, T., Scialom, M.:} {\em A {S}chr\"odinger equation with time-oscillating nonlinearity}, {Revista Matem\'atica Complutense}, {\bf 23}, (2010), 321--339.


\bibitem{CHM} Cazenave, T., Haraux, A., Martel, Y.: {\em An Introduction to Semilinear Evolution Equations}, Oxford Lecture Series
in Mathematics and Its Applications. Oxford University Press, 1999.


\bibitem{Ca} Cazenave, T.: {\em Semilinear Schr{\"o}dinger equations}, Courant Lecture Notes in Mathematics, Courant Institute of Mathematical Sciences, New York; American Mathematical Society, Providence, RI, 2003.


\bibitem{CGT}{Colliander, T., Grillakis, M., Tzirakis, N.:} {\em Tensor products and correlation estimates with applications to nonlinear Schr\"odinger equations}, Comm. Pure Appl. Math. {\bf 62} (2009), no. 7, 920--968.

\bibitem{JHDE} Colliander, J., Ibrahim, S., Majdoub, M., Masmoudi, N.: {\em Energy Critical {\tt NLS} in two space dimensions}, Journal of Hyperbolic Differential Equations, Vol. {\bf 6} (2009), 549--575.

\bibitem{CKSTT}{Colliander, J., Keel, M., Staffilani, G., Takaoka, H., Tao, T.:} {\em Global well-posedness and scattering in the energy space for the critical nonlinear Schr\"odinger equation in $\R^3$}, Ann. of Math. (2) 167 (2008), no. 3, 767--865.



\bibitem{Dan} D{\"a}nchin, R.: {\em Fourier Analysis methods for evolutionary Partial differential
equations}, Lecture notes, Varsovie, 2014.


\bibitem{Fang} {Fang, D., Han, Z.:} {\em A {S}chr\"odinger equation with time-oscillating critical
              nonlinearity}, {Nonlinear Analysis. Theory, Methods \& Applications}, Vol. {\bf 74}, (2011), 4698--4708.


\bibitem{GV4} Ginibre, J., Velo, G.: {\em Scattering theory in the energy space for a class of nonlinear Schr\"odinger equations},
J. Math. Pures Appl. (9) {\bf 64} (1985), no.~4, 363--401.

\bibitem{IMM} {Ibrahim, S., Majdoub, M., Masmoudi, N.:} {\em
Double logarithmic inequality with a sharp constant},  Proc. Amer. Math. Soc.  {\bf 135} (2007),  no. 1, 87--97.



\bibitem{Nonlinearity} Ibrahim, S., Majdoub, M., Masmoudi, N., Nakanishi, K.: {\em Scattering for the two-dimensional {\tt NLS} with exponential nonlinearity}, {Nonlinearity}, {\bf 25} ({2012}), {1843--1849}.


\bibitem{Duke} Ibrahim, S., Majdoub, M., Masmoudi, N., Nakanishi, K.: {\em Scattering for the two-dimensional energy-critical wave
              equation}, {Duke Math. J.}, {\bf 150} ({2009}), {287--329}.


\bibitem{Linares} Linares, F., Ponce, G.: {\em Introduction to nonlinear dispersive equations}, Universitext, Springer, New York, 2009.


\bibitem{NO3}{Nakamura, M., Ozawa, T.:} {\em Nonlinear Schr\"odinger
equations in the Sobolev space of critical order }, J. Funct. Anal. {\bf 155} (1998), 364--380.


\bibitem{2Dsubcrit} Nakanishi, K.: {\em Energy scattering for nonlinear Klein-Gordon and Schr\"odinger equations in spatial dimensions $1$ and $2$},  J.~Funct.~Anal. {\bf 169} (1999), no.~1, 201--225.

\bibitem{PV}{Planchon, F., Vega, L.:} {\em Bilinear virial identities and applications}, Ann. Sci. Éc. Norm. Supér. (4) 42 (2009), no. 2, 261--290.

\bibitem{Ru}{Ruf, B.:} {\em A sharp Trudinger-Moser type
inequality for unbounded domains in $\R^2$}, J. Funct. Anal. {\bf 219} (2005), no. 2, 340--367.


\end{thebibliography}
\end{document}